\newtheorem{thm}{Théorème}
\newtheorem{prop}[thm]{Proposition}
\newtheorem{cor}[thm]{Corollaire}
\newtheorem{thm_e}[thm]{Theorem}
\newtheorem{cor_e}[thm]{Corollary}
\renewcommand*{\P}{\mathbbm{P}}
\newcommand*{\esp}{\mathbbm{E}}
\DeclareMathOperator{\Var}{Var}
\DeclareMathOperator{\Tr}{Tr}
\newcommand*{\Ncal}{\mathcal{N}} 
\newcommand*{\indic}{\mathbbm{1}}
\newcommand*{\ensembledenombres}{\mathbb}
\newcommand*{\R}{\ensembledenombres{R}}
\title{Eigenvalue variance bounds for covariance matrices}
\author{S. Dallaporta}
\date {University of Toulouse and CMLA, ENS Cachan}
\begin{document}
\maketitle
 \bigskip 

{{\bf Abstract.} 
{\it This work is concerned with finite range bounds on the variance of individual eigenvalues
of random covariance matrices, both in the bulk and at the edge of the spectrum. In a preceding paper, the author established analogous results for Wigner matrices \cite{Da_2012_variance_bound_wigner} and stated the results for covariance matrices. They are proved in the present paper. Relying on the LUE example, which needs to be investigated first, the main bounds are extended to complex covariance matrices by means of the Tao, Vu and Wang Four Moment Theorem and recent localization results by Pillai and Yin. The case of real covariance matrices is obtained from interlacing formulas.}

\bigskip \bigskip \bigskip


Random covariance matrices, or Wishart matrices, were introduced by the statistician Wishart in $1928$ to model tables of random data in multivariate statistics. The spectral properties of these
matrices are indeed useful for example for studying the properties of certain random vectors, elaborating statistical tests and for principal component analysis. Similarly to
Wigner matrices, which were introduced by the physicist Wigner in the fifties in order to study
infinite-dimensional operators in statistical physics, the asymptotic spectral properties were soon
conjectured to be universal in the sense they do not depend on the distribution of the entries (see for example \cite{AnGuZe_2010_book} and \cite{Me_1991_book}). Eigenvalues
were studied asymptotically both at the global and local regimes, considering for instance the global
behavior of the spectrum, the behavior of extreme eigenvalues or the spacings between eigenvalues in the bulk
of the spectrum. In the Gaussian case, the eigenvalue joint distribution is explicitly known, allowing for a
complete study of the asymptotic spectral properties (see for example \cite{AnGuZe_2010_book},
\cite{BaSi_2010_book}, \cite{PaSh_2011_book}). One of the main goals of random matrix theory over the past
decades was to extend these results to non-Gaussian covariance matrices.

However, in multivariate statistics, quantitative finite-range results are more useful than asymptotic properties. Furthermore, random covariance matrices have become useful in several other fields, such as compressed sensing (see
\cite{Ve_2012_non_asymptotic}), wireless communication and quantitative finance (see \cite{BaSi_2010_book}). In these fields too, quantitative results are of high interest. Several recent developments have thus been concerned with non-asymptotic
random matrix theory. See for example some recent surveys and papers on this topic \cite{RuVe_2010_non_asymptotic_extreme_singular_values}, \cite{Ve_2012_non_asymptotic} and \cite{Ve_2012_proximite_matrice_covariance_empirique}. In this paper, we investigate in this respect variance bounds on the eigenvalues of families of covariance matrices. In a preceding paper \cite{Da_2012_variance_bound_wigner}, we established similar bounds for Wigner matrices and the results for covariance matrices were stated but not proved. In the present paper, we provide the corresponding proofs. For the sake of completeness and in order to make the present paper readable separately, we reproduce here some parts of the previous one \cite{Da_2012_variance_bound_wigner}.


Random covariance matrices are defined by the following. Let $X$ be a $m\times n$ (real or complex) matrix,
with $m
\geqslant n$, such that its entries are independent, centered and have variance $1$. Then
$S_{m,n}=\frac{1}{m}X^*X$ is a covariance matrix. An important example is the case when the entries of $X$ are
Gaussian. Then $S_{m,n}$ belongs to the so-called Laguerre Unitary Ensemble (LUE) if the entries of $X$ are
complex and to the Laguerre Orthogonal Ensemble (LOE) if they are real. $S_{m,n}$ is Hermitian (or real
symmetric) and therefore has $n$ real eigenvalues. As $m \geqslant n$, none of these eigenvalues is trivial.
Furthermore, these eigenvalues are nonnegative and will be denoted by $0 \leqslant \lambda_1 \leqslant \dots
\leqslant \lambda_n$.

Among universality results, the classical Marchenko-Pastur theorem states that, if $\frac{m}{n} \to \rho
\geqslant 1$ when $n$ goes to infinity, the empirical spectral measure
$L_{m,n}=\frac{1}{n}\sum_{j=1}^n\delta_{\lambda_j}$ converges almost surely to a deterministic measure,
called the Marchenko-Pastur distribution of parameter $\rho$. This measure is compactly supported and is
absolutely continuous with respect to Lebesgue measure, with density
\[ d\mu_{MP(\rho)}(x)=\frac{1}{2\pi
x}\sqrt{(b_{\rho}-x)(x-a_{\rho})}\mathbbm{1}_{[a_{\rho},b_{\rho}]}(x)dx, \]
where $a_{\rho}=(1-\sqrt{\rho})^2$ and $b_{\rho}=(1+\sqrt{\rho})^2$ (see for example \cite{BaSi_2010_book}).
We denote by $\mu_{m,n}$ the approximate Marchenko-Pastur density
\[\mu_{m,n}(x) =
\frac{1}{2\pi x}\sqrt{(x-a_{m,n})(b_{m,n}-x)}\mathbbm{1}_{[a_{m,n},b_{m,n}]}(x), \]
with $a_{m,n}=\big(1-\sqrt{\frac{m}{n}}\big)^2$ and $b_{m,n}=\big(1+\sqrt{\frac{m}{n}}\big)^2$. The behavior of individual eigenvalues was more difficult to achieve. At the edge of the spectrum, it was proved by Bai \emph{et al} (see \cite{BaKrYi_1988_largest_covariance}, \cite{BaSiYi_1988_largest_covariance} and \cite{BaYi_1993_smallest_covariance}) under a condition on the fourth moments of the entries that, almost surely,
\begin{equation}\label{eq_lgn_eigenvalue_covariance}
a_{m,n}-\lambda_1 \underset{n \to \infty}{\to} 0 \quad \textrm{and} \quad  \lambda_n-b_{m,n} \underset{n \to \infty}{\to} 0.
\end{equation}
Once the behavior of eigenvalues at the edge of the spectrum is known, some local information on eigenvalues in the bulk can be deduced from the Marchenko-Pastur theorem. Indeed the Glivenko-Cantelli Theorem gives that almost surely
\[ \sup_{x \in \R} |F_{m,n}(x)-G_{m,n}(x)| \underset{n \to \infty}{\to} 0, \]
where $F_{m,n}$ is the distribution function of the empirical spectral distribution $L_{m,n}$ and $G_{m,n}$ is the distribution function of the approximate Marchenko-Pastur law. Combining this with crude bounds on the Marchenko-Pastur density and with \eqref{eq_lgn_eigenvalue_covariance} leads to the following law of large numbers. For all $\eta>0$, for all $\eta n \leqslant j\leqslant (1-\eta)n$, i.e. for eigenvalues in the bulk of the spectrum, 
\[ \lambda_j- \gamma_j^{m,n} \underset{n \to \infty}{\to} 0,\]
almost surely, where the theoretical location $\gamma_j^{m,n} \in [a_{m,n},b_{m,n}]$ of the $j$-th eigenvalue  $\lambda_j$ is defined by
\[\frac{j}{n}=\int_{a_{m,n}}^{\gamma_j^{m,n}} \mu_{m,n}(x)\,dx. \]
At the fluctuation level, the behavior of individual eigenvalues depends heavily on their location in the spectrum and on the value of the parameter $\rho$, at least for the smallest eigenvalues.
Indeed, when $\rho>1$, the left-side of the limiting support $a_{\rho}$ is positive. As a consequence, eigenvalues, and in particular smallest eigenvalues, can be
less than $a_{\rho}$, which is therefore called a soft edge. On the contrary, when $\rho=1$, $a_{\rho}=0$ and
no eigenvalue can be less than $a_{\rho}$. In this case, the left-side is called a hard edge. Even the
behavior of the Marchenko-Pastur density is different at the lower edge in these two cases. Indeed, when $\rho>1$, the Marchenko-Pastur density function is bounded whereas it goes to $\infty$ when $x \to 0$ if $\rho=1$. Therefore, the
behavior of the smallest eigenvalue is expected to be different according to $\rho$. Indeed, on the one hand, when $m=n$ (which implies $\rho=1$), Edelman proved that, for LUE matrices,
\[n^2\lambda_1 \overset{(d)}{\underset{n \to \infty}{\to}} \mathcal{E}(1), \]
where $\mathcal{E}(1)$ is an exponential random variable with parameter $1$. A similar result is available for
LOE matrices, see \cite{Ed_1988_condition_number} for more details. This theorem was later extended to more
general covariance matrices by Tao and Vu in \cite{TaVu_2010_smallest_singular_value}. On the other hand, when
$\rho>1$, Borodin and Forrester proved that, for LUE matrices,
\[n^{2/3}\frac{a_{m,n}-\lambda_1}{a_{m,n}^{2/3}
\big(\frac{m}{n}\big)^{-1/6}} \overset{(d)}{\underset{n \to \infty}{\to}}
F_2, \]
where $F_2$ is the so-called Tracy-Widom law (see \cite{BoFo_2003_hard_soft_edge_transition}). A similar
result holds for LOE matrices. These theorems were later extended to some non-Gaussian covariance matrices by
Feldheim and Sodin in \cite{FeSo_2010_smallest_eigenvalue_covariance} and then to large families of covariance
matrices by Wang (see \cite{Wa_2011_covariance_edge}). On the contrary, the
behavior of the largest eigenvalue relies much less on the value of the
parameter $\rho$. Indeed Johansson (see
\cite{Jo_2000_largest_eigenvalue_complex}) proved that, for LUE matrices,
\[n^{2/3}\frac{\lambda_n-b_{m,n}}{b_{m,n}^{2/3}
\big(\frac{m}{n}\big)^{-1/6}} \overset{(d)}{\underset{n \to \infty}{\to}}
F_2. \]
Johnstone proved a similar result for LOE matrices (see \cite{Jo_2001_largest_eigenvalue_real}).
Soshnikov and Péché extended these theorems to more general covariance matrices in
\cite{So_2002_largest_covariance} and \cite{Pe_2009_largest_covariance}.
They were then extended to large families of non-Gaussian covariance matrices by
Wang in \cite{Wa_2011_covariance_edge}. From these central limit theorems, the variances of the smallest (when
$\rho>1$) and largest eigenvalues are guessed to be of the order of $n^{-4/3}$.

In the bulk of the spectrum, i.e. for all eigenvalues $\lambda_j$ such that $\eta n \leqslant j \leqslant
(1-\eta)n$ for a fixed $\eta>0$, Su proved in \cite{Su_2006_fluctuations} that
\[\mu_{m,n}(\gamma_j^{m,n})\frac{\lambda_j-\gamma_j^{m,n}}{\sqrt{\frac{1}{2\pi^2}\frac{\log n}{n^2}}} \overset{(d)}{\underset{n \to \infty}{\to}}
\mathcal{N}(0,1), \]
in distribution. As for the largest eigenvalue, the value of parameter $\rho$ does not change significantly the behavior of eigenvalues in the bulk. This Central Limit Theorem was extended to families of non-Gaussian
matrices by Tao and Vu
in \cite{TaVu_2012_covariance}. The variances of eigenvalues in the bulk are then guessed to be of the order
of
$\frac{\log n}{n^2}$. Su proved in \cite{Su_2006_fluctuations} a similar Central Limit Theorem for right-side intermediate eigenvalues, which means eigenvalues $\lambda_j$ with $\frac{j}{n} \to 1$ and $n-j \to \infty$ when $n$ goes to infinity. From this theorem, the variance of such eigenvalues is guessed to be of the order of $\frac{\log (n-j)}{n^{4/3}(n-j)^{2/3}}$. This theorem was later extended to non-Gaussian covariance matrices by Wang in \cite{Wa_2011_covariance_edge}. It seems that a similar result holds for left-side intermediate eigenvalues when $\rho>1$ but Su did not carry out the computations in this case.

The aim of this paper is to provide sharp non-asymptotic bounds for the variance of individual eigenvalues of
covariance matrices. For simplicity, we basically assume that $\rho>1$. More precisely, we assume that $1<A_1 \leqslant \frac{m}{n} \leqslant A_2$ (where $A_1$ and $A_2$ are fixed constants). When $m=n$ (therefore $\rho=1$), it is possible to show that the following results in the bulk and on the right-side of the spectrum are true. It will be specified in the corresponding sections. Assume furthermore that $S_{m,n}$ is a complex covariance matrix (respectively real) whose entries have an exponential decay and have the same first four moments as those of a LUE (respectively LOE) matrix. This condition is called condition $(C0)$ and will be detailed in Section \ref{section_covariance}. The main results of this paper are the following theorems.

\begin{thm_e}[in the bulk of the spectrum]\label{thm_bulk_covariance}
For all $0 <\eta \leqslant \frac{1}{2}$, there exists a constant $C>0$ depending only on $\eta$, $A_1$ and $A_2$
such that, for all $\eta n \leqslant j \leqslant (1-\eta)n$,
\begin{equation}\label{eq_bound_covariance_bulk}
\Var(\lambda_j) \leqslant C\frac{\log n}{n^2}.
\end{equation}
\end{thm_e}

\begin{thm_e}[between the bulk and the edge of the spectrum]\label{thm_intermediate_covariance}
There exists a constant $\kappa>0$ (depending on $A_1$ and $A_2$) such that the following holds. For all $K>\kappa$, for all $0<\eta \leqslant \frac{1}{2}$, there exists a constant $C>0$~(depending on $K$, $\eta$, $A_1$ and $A_2$) such that for all covariance matrix $S_{m,n}$, for all $(1-\eta)n \leqslant j \leqslant n-K\log n$,
\begin{equation}\label{eq_bound_covariance_intermediate}
 \Var(\lambda_j) \leqslant C\frac{\log (n-j)}{n^{4/3}(n-j)^{2/3}}.
\end{equation}
\end{thm_e}

\begin{thm_e}[at the edge of the spectrum]\label{thm_edge_covariance}
There exists a constant $C>0$ depending only on $A_1$ and $A_2$ such that,
\begin{equation}\label{eq_bound_covariance_edge}
\Var(\lambda_n) \leqslant Cn^{-4/3}.
\end{equation}
\end{thm_e}
It should be mentioned that Theorem \ref{thm_intermediate_covariance} (respectively Theorem \ref{thm_edge_covariance}) probably holds for left-hand side intermediate eigenvalues (respectively the smallest eigenvalue $\lambda_1$), when $\rho>1$. We refer to Section \ref{section_intermediate_LUE} for more details on that topic. On the contrary, when $\rho=1$, the behavior of eigenvalues on the left-side of the spectrum is probably very different and much more difficult to study.

The first two theorems do not seem to be known even for LUE matrices. The first step is then to establish these
results for such matrices. The proof relies on the fact that the eigenvalues of a LUE matrix form a
determinantal process and therefore that the eigenvalue counting function has the same distribution as a sum
of independent Bernoulli variables \cite{BeKrPeVi_2006_determinantal}. Using a standard concentration inequality for Bernoulli variables, it
is then possible to establish a deviation inequality for individual eigenvalues. A simple integration leads
to the desired bounds on the variances. On the contrary, Theorem \ref{thm_edge_covariance} on the largest eigenvalue $\lambda_n$ of LUE matrices has been known for some time, at least for the largest eigenvalue $\lambda_n$ (see
\cite{LeRi_2010_deviations}). From these results for the LUE, Theorems \ref{thm_bulk_covariance}, \ref{thm_intermediate_covariance} and
\ref{thm_edge_covariance} are
then extended to large families of non-Gaussian covariance matrices by means of localization properties by
Pillai and Yin (see \cite{PiYi_2011_covariance}) and the Four Moment Theorem by Tao-Vu and Wang (see \cite{TaVu_2012_covariance} and \cite{Wa_2011_covariance_edge}).
While the localization properties almost yield the correct order on the variance, the Four Moment Theorem is
used to reach the optimal bound via a comparison with LUE matrices. Theorems \ref{thm_bulk_covariance}, \ref{thm_intermediate_covariance} and
\ref{thm_edge_covariance} are established first in the complex case. The real case is then achieved by means
of
interlacing formulas. Note that similar inequalities hold for higher moments of the eigenvalues. The proofs are exactly the same.

As a corollary of the preceding results on the variances and provided Theorem \ref{thm_intermediate_covariance} holds also for left-hand side intermediate eigenvalues, a bound on the rate of convergence of the empirical spectral distribution $L_{m,n}$ towards the Marchenko-Pastur distribution can be achieved. It can be written in terms of the $2$-Wasserstein distance between the approximate Marchenko-Pastur distribution $\mu_{m,n}$ and $L_{m,n}$, defined by the following. For $\mu$ and $\nu$ two probability measures on $\R$,
\[W_2(\mu,\nu)=\inf \bigg (\int_{\mathbb{R}^2}|x-y|^2\,d\pi(x,y)\bigg)^{1/2},\]
 where the infimum is taken over all probability measure $\pi$ on $\mathbb{R}^2$ such that its first marginal is $\mu$ and its second marginal is $\nu$. Note that the rate of convergence of this empirical distribution has also been investigated in terms of the Kolmogorov distance between $L_{m,n}$ and $\mu_{m,n}$ (see for example \cite{GoTi_2004_convergence_covariance} and \cite{GoTi_2011_convergence_covariance}). This distance is defined by
\[d_K(L_{m,n},\mu_{m,n})= \sup_{x \in \mathbb{R}}
\big|\tfrac{1}{n}\mathcal{N}_x-G_{m,n}(x)\big|,\] 
where $\mathcal{N}_x$ is the eigenvalue counting function, i.e. $\mathcal{N}_x$ is the number of eigenvalues in $(-\infty,x]$, and $G_{m,n}$ is the distribution function of the approximate Marchenko-Pastur distribution. Götze and Tikhomirov recently showed that, with high probability,
\[d_K(L_{m,n},\mu_{m,n}) \leqslant \frac{(\log n)^c}{n}\]
for some universal constant $c>0$ (see \cite{GoTi_2011_convergence_covariance}). The rate of convergence in terms of the $1$-Wasserstein distance $W_1$, also called the Kantorovich-Rubinstein distance, was studied by Guionnet and Zeitouni in \cite{GuZe_2000_concentration_spectral_measure}, who proved that $\esp[W_1(L_{m,n},\esp[L_{m,n}])]$ is bounded by $Cn^{-2/5}$. The following statement is concerned with the expectation of $W_2(L_{m,n},\mu_{m,n})$.
\begin{cor_e}\label{cor_wasserstein_2_covariance} Let $1<A_1<A_2$. Then there exists a constant $C>0$ depending only on $A_1$ and $A_2$ such that, for all $m$ and $n$ such that $1<A_1 \leqslant \frac{m}{n} \leqslant A_2$,
\begin{equation}
\esp\big[W_2^2(L_{m,n},\mu_{m,n})\big]\leqslant C\frac{\log n}{n^2}.
\end{equation}
\end{cor_e}
The proof of this corollary relies on the fact that $\esp[W_2^2(L_{m,n},\mu_{m,n})]$
is bounded above, up to a constant, by the sum of the expectations
$\esp[(\lambda_j-\gamma_j^{m,n})^2]$. The previously established bounds
then easily yield the result, provided Theorem \ref{thm_intermediate_covariance} holds for left-hand side intermediate eigenvalues.

Turning now to the content of this paper, Section \ref{LUE} describes Theorems \ref{thm_bulk_covariance}, \ref{thm_intermediate_covariance} and
\ref{thm_edge_covariance} in the LUE case. Section \ref{section_covariance} starts with the Localization Theorem of
Pillai and Yin
(see \cite{PiYi_2011_covariance}) and the Four Moment Theorem of Tao-Vu and Wang (see \cite{TaVu_2012_covariance} and \cite{Wa_2011_covariance_edge}). Theorems
\ref{thm_bulk_covariance}, \ref{thm_intermediate_covariance} and \ref{thm_edge_covariance} are then established for families of covariance
matrices. Section \ref{real_covariance}
is devoted to real matrices. Section~\ref{section_wasserstein_covariance} deals with Corollary \ref {cor_wasserstein_2_covariance} and the rate of convergence of $L_{m,n}$
towards $\mu_{MP(\rho)}$ in terms of $2$-Wasserstein distance.

Throughout this paper, $C$ and $c$ will denote positive constants, which depend on the indicated parameters and whose values may change from one line to another.

\section[Variance bounds for LUE matrices]{Deviation inequalities and variance bounds for LUE matrices}\label{LUE}
This section is concerned with Gaussian covariance matrices. The results and techniques used here heavily rely on the Gaussian structure, in particular on the determinantal properties of the eigenvalues. As a consequence of this determinantal structure, the eigenvalue counting function is known to have the same distribution as a sum of independent Bernoulli variables (see \cite{BeKrPeVi_2006_determinantal}, \cite{AnGuZe_2010_book}). Its mean and variance were computed by Su (see \cite{Su_2006_fluctuations}). Deviation inequalities can therefore be established for individual eigenvalues, leading to the announced bounds on the variance. All the proofs are written in the case when $1<A_1 \leqslant \frac{m}{n} \leqslant A_2$. Assuming $m=n$, if the results still hold, the proofs are very similar and are therefore not reproduced. 

\subsection{Inside the bulk of the spectrum}\label{deviation_LUE_bulk}
The aim of this section is to prove the following theorem for eigenvalues in the bulk, i.e. for $\lambda_j$ with $\eta n \leqslant j \leqslant (1-\eta)n$.
\begin{thm_e}\label{thm_LUE_bulk}
Let $1<A_1<A_2$. Let $S_{m,n}$ be a LUE matrix. For any $0<\eta \leqslant \frac{1}{2}$, there exists a constant $C>0$ depending only on $\eta$, $A_1$ and $A_2$ such that for all $A_1 \leqslant \frac{m}{n} \leqslant A_2$ and all $\eta n
\leqslant j \leqslant (1-\eta)n$,
\begin{equation}
\esp\big[|\lambda_j-\gamma_j^{m,n}|^2\big] \leqslant C\frac{\log n}{n^2}.
\end{equation}
In particular,
\begin{equation}
\Var(\lambda_j) \leqslant C\frac{\log n}{n^2}.
\end{equation}
\end{thm_e}
The proof of this theorem relies on the properties of the eigenvalue counting function, denoted by $\Ncal_t=\sum_{i=1}^n\indic_{\lambda_i \leqslant t}$ for every $t \in \mathbb{R}$. As announced, $\Ncal_t$ has the same distribution as a sum of independent Bernoulli variables \cite{BeKrPeVi_2006_determinantal}. Consequently, sharp deviation inequalities are available for $\Ncal_t$. Applying Bernstein's inequality leads to
\begin{equation}\label{bernstein_covariance}
 \P\big(\big|\Ncal_t-\esp[\Ncal_t]\big|\geqslant u\big)\leqslant 2\exp \Big
(-\frac{u^2}{2\sigma_t^2+u}\Big),
\end{equation}
where $\sigma_t^2$ is the variance of $\Ncal_t$ (see for example \cite{Va_1998_asymptotic_statistics}). Götze and Tikhomirov proved in \cite{GoTi_2004_convergence_covariance} that, as soon as $1 < A_1 \leqslant \frac{m}{n} \leqslant A_2$, there exists a positive constant $C_1$ depending only on $A_1$ and $A_2$ such that
\begin{equation}\label{eq_gotze_tikho_covariance}
\sup_{t \in \R} \Big|\esp[\Ncal_t]-n\int_{-\infty}^t\mu_{m,n}(x)\,dx\Big| \leqslant C_1,
\end{equation}
for every LUE matrix $S_{m,n}$. For simplicity, we denote $\int_{-\infty}^t\mu_{m,n}(x)\,dx$ by $\mu_t$. Together with \eqref{bernstein_covariance}, for every $u \geqslant 0$,
\begin{equation}\label{eq_inequality_counting_function_lue}
 \P\big(|\Ncal_t-n\mu_t|\geqslant u+C_1\big)\leqslant
2\exp\Big(-\frac{u^2}{2\sigma_t^2+u}\Big).
\end{equation}
Among Su's results (see \cite{Su_2006_fluctuations}), for every $\delta >0$,
there exists $c_{\delta}>0$ such that
\begin{equation}\label{sup_variance_lue}
\sup_{t \in I_{\delta}} \sigma_t^2 \leqslant c_{\delta}\log n,
\end{equation}
where $I_{\delta}=[a_{m,n}+\delta,b_{m,n}-\delta]$. Combining \eqref{eq_inequality_counting_function_lue} and \eqref{sup_variance_lue}, deviation inequalities for individual eigenvalues in the bulk are then available, as stated in the following proposition.
%
\begin{prop}\label{prop_deviation_lue_bulk}
Assume that $1 < A_1 \leqslant \frac{m}{n} \leqslant A_2$. Let $\eta>0$.
Then there exist positive constants $C$, $c$, $c'$ and $\delta$ such that the following holds. For any LUE matrix $S_{m,n}$, for all $\eta n \leqslant j \leqslant (1-\eta)n$, for all $c \leqslant u \leqslant c'n$,
\begin{equation}\label{eq_deviation_lue_bulk}
\P\big(|\lambda_j-\gamma_j^{m,n}| \geqslant \frac{u}{n}\big) \leqslant 4 \exp\Big(-\frac{C^2u^2}{2c_{\delta}\log n+Cu}\Big).
\end{equation}
The constants $C$, $c'$ and $\delta$ depend only on $\eta$ and $A_2$, whereas the constant $c$ depends only on $\eta$, $A_1$ and $A_2$.
\end{prop}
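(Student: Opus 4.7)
My plan is to convert the deviation event for $\lambda_j$ into a deviation event for the counting function $\Ncal_t$, then feed it into the Bernstein-type inequality \eqref{eq_inequality_counting_function_lue} while using Su's variance bound \eqref{sup_variance_lue}. The starting point is the elementary identity $\{\lambda_j>t\}=\{\Ncal_t<j\}$ (and its complement), which gives
\[
\P\bigl(|\lambda_j-\gamma_j^{m,n}|\geqslant u/n\bigr)\;\leqslant\;\P\bigl(\Ncal_{t_+}<j\bigr)+\P\bigl(\Ncal_{t_-}\geqslant j\bigr),
\]
where $t_\pm=\gamma_j^{m,n}\pm u/n$. Since the theoretical location is defined by $n\mu_{\gamma_j^{m,n}}=j$, the next task is to quantify the shift $n\mu_{t_\pm}-j$ so that the two events above become genuine deviations of $\Ncal_{t_\pm}$ around its mean $n\mu_{t_\pm}$.

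This quantification is the key quantitative step. Because $\eta n\leqslant j\leqslant(1-\eta)n$ and $1<A_1\leqslant m/n\leqslant A_2$, the quantile $\gamma_j^{m,n}$ stays at distance at least some $\delta_0=\delta_0(\eta,A_2)>0$ from both edges $a_{m,n}$ and $b_{m,n}$, and on a neighborhood of $\gamma_j^{m,n}$ the Marchenko--Pastur density $\mu_{m,n}$ is bounded below by some $\kappa=\kappa(\eta,A_2)>0$. Choosing $c'=c'(\eta,A_2)>0$ small enough that $u/n\leqslant c'$ keeps $t_\pm$ inside $I_\delta$ for a suitable $\delta=\delta(\eta,A_2)$, integrating the density lower bound yields
\[
n\mu_{t_+}-j\;\geqslant\;\kappa u \qquad \text{and} \qquad j-n\mu_{t_-}\;\geqslant\;\kappa u.
\]

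To apply \eqref{eq_inequality_counting_function_lue} I must absorb the G\"otze--Tikhomirov offset $C_1$. Setting $c:=2C_1/\kappa$, for $u\geqslant c$ the quantity $j$ lies at distance at least $\kappa u-C_1\geqslant \kappa u/2$ from $n\mu_{t_\pm}$, so by \eqref{eq_inequality_counting_function_lue} (applied with $\tilde u=\kappa u-C_1$) and monotonicity of $x\mapsto x^2/(2\sigma^2+x)$, each tail is bounded by $2\exp\bigl(-(\kappa u/2)^2/(2\sigma_{t_\pm}^2+\kappa u/2)\bigr)$. Finally, Su's estimate \eqref{sup_variance_lue} applied at this $\delta$ gives $\sigma_{t_\pm}^2\leqslant c_\delta\log n$, and summing the two tail contributions produces the factor $4$ and the announced bound with $C=\kappa/2$.

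The main obstacle is just bookkeeping: one must pick $c'$ small enough that $t_\pm$ stay inside a single $I_\delta$ where both the density lower bound $\kappa$ and Su's variance bound apply uniformly in $m,n,j$, and then $c$ large enough to swallow the constant $C_1$, all while tracking the claimed dependence (namely, that $C,c',\delta$ depend only on $\eta,A_2$ through the density lower bound and the location of the upper edge, while $c$ additionally depends on $A_1$ through the constant $C_1$ of \eqref{eq_gotze_tikho_covariance}). No deeper probabilistic input is required beyond Bernstein's inequality applied to the determinantal counting function.
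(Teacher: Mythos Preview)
Your proposal is correct and follows essentially the same route as the paper: convert the eigenvalue deviation into a deviation of the counting function via $\{\lambda_j>t\}=\{\Ncal_t<j\}$, lower-bound the increment $n(\mu_{t_\pm}-\mu_{\gamma_j^{m,n}})$ by $\kappa u$ using a uniform lower bound on the Marchenko--Pastur density near $\gamma_j^{m,n}$, absorb the G\"otze--Tikhomirov constant $C_1$ by taking $u\geqslant 2C_1/\kappa$, and finish with Su's variance estimate inside $I_\delta$. The paper carries out the density lower-bound step more explicitly (integrating $\sqrt{b_{m,n}-x}$ and deriving quantitative bounds on $b_{m,n}-\gamma_j^{m,n}$ and $\gamma_j^{m,n}-a_{m,n}$ in terms of $\eta$ and $b_{m,n}-a_{m,n}\leqslant 4\sqrt{A_2}$), and splits into the cases $j\geqslant n/2$ and $j\leqslant n/2$, but the logic and the dependence of the constants are exactly as you describe.
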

\noindent Note that this proposition still holds if $m=n$, as Götze and Tikhomirov proved in \cite{GoTi_2004_convergence_covariance} that \eqref{eq_gotze_tikho_covariance} holds in that case.
%
\begin{proof}
Let $\eta>0$ and $u \geqslant 0$. Assume first that $\frac{n}{2} \leqslant j \leqslant (1-\eta)n$. Start with estimating the probability that $\lambda_j$ is greater than $\gamma_j^{m,n}+\frac{u}{n}$.
\begin{align*}
\P\Big(\lambda_j >\gamma_j^{m,n}+ \frac{u}{n}\Big) & = \P\big(\Ncal_{\gamma_j^{m,n}+\frac{u}{n}}<j\big)\\
 & = \P\big(n\mu_{\gamma_j^{m,n}+\frac{u}{n}}-\Ncal_{\gamma_j^{m,n}+\frac{u}{n}}>n\mu_{\gamma_j^{m,n}+\frac{u}{n}}-j\big)\\
 & =
\P\big(n\mu_{\gamma_j^{m,n}+\frac{u}{n}}-\Ncal_{\gamma_j^{m,n}+\frac{u}{n}}>n(\mu_{\gamma_j^{m,n}+\frac{u}{n}}-\mu_{\gamma_j^{m,n}} )\big)\\
 & \leqslant \P\big(|\Ncal_{\gamma_j^{m,n}+\frac{u}{n}}-n\mu_{\gamma_j^{m,n}+\frac{u}{n}}|>n(\mu_{\gamma_j^{m,n}+\frac{u}{n}}-\mu_{\gamma_j^{m,n}} )\big),
\end{align*}
where it has been used that $\mu_{\gamma_j^{m,n}}=\frac{j}{n}$. In order to use \eqref{eq_inequality_counting_function_lue}, a lower bound on $n(\mu_{\gamma_j^{m,n}+\frac{u}{n}}-\mu_{\gamma_j^{m,n}})$ is needed.
\begin{align*}
\mu_{\gamma_j^{m,n}+\frac{u}{n}}-\mu_{\gamma_j^{m,n}} & = \int_{\gamma_j^{m,n}}^{\gamma_j^{m,n}+\frac{u}{n}} \frac{1}{2\pi x}\sqrt{(b_{m,n}-x)(x-a_{m,n})}\,dx\\
 & \geqslant \frac{\sqrt{\gamma_j^{m,n}-a_{m,n}}}{2\pi b_{m,n}}\int_{\gamma_j^{m,n}}^{\gamma_j^{m,n}+\frac{u}{n}} \sqrt{b_{m,n}-x}\,dx\\
   & \geqslant \frac{\sqrt{\gamma_j^{m,n}-a_{m,n}}}{3\pi b_{m,n}}(b_{m,n}-\gamma_j^{m,n})^{3/2} \Big(1-\big(1-\frac{u/n}{b_{m,n}-\gamma_j^{m,n}}\big)^{3/2}\Big)\\
    & \geqslant \frac{\sqrt{\gamma_j^{m,n}-a_{m,n}}}{3\pi b_{m,n}}(b_{m,n}-\gamma_j^{m,n})^{1/2}\frac{u}{n},
\end{align*}
if $\gamma_j^{m,n}+\frac{u}{n} \leqslant b_{m,n}$. Furthermore, by definition of $\gamma_j^{m,n}$,
\begin{align*}
1-\frac{j}{n} & = \int_{\gamma_j^{m,n}}^{b_{m,n}}\frac{1}{2\pi
x}\sqrt{(x-a_{m,n})(b_{m,n}-x)}\,dx\\
 & \leqslant \frac{\sqrt{b_{m,n}-a_{m,n}}}{2\pi\gamma_j^{m,n}}
\int_{\gamma_j^{m,n}}^{b_{m,n}}\sqrt{b_{m,n}-x}\,dx\\
 & \leqslant 
\frac{\sqrt{b_{m,n}-a_{m,n}}}{3\pi\gamma_j^{m,n}}\big(b_{m,n}-\gamma_j^{m,n}\big)^{3/2}.
\end{align*}
Then
\begin{equation}\label{eq_distance_gamma_bord_droit_dependant_j}
b_{m,n}-\gamma_j^{m,n} \geqslant \Big(\frac{3\pi}{\sqrt{b_{m,n}-a_{m,n}}}\gamma_j^{m,n}\big(1-\frac{j}{n}\big)\Big)^{2/3}.
\end{equation}
Moreover
\begin{align*}
\frac{j}{n} & = \int_{a_{m,n}}^{\gamma_j^{m,n}}\frac{1}{2\pi
x}\sqrt{(x-a_{m,n})(b_{m,n}-x)}\,dx\\
 & \leqslant \frac{\sqrt{b_{m,n}-a_{m,n}}}{2\pi}
\int_{a_{m,n}}^{\gamma_j^{m,n}}\frac{1}{x}\sqrt{x-a_{m,n}}\,dx\\
 & \leqslant \frac{\sqrt{b_{m,n}-a_{m,n}}}{2\pi}\int_0^{\sqrt{\gamma_j^{m,n}-a_{m,n}}}\frac{2v^2}{v^2+a_{m,n}}\,dv,
\end{align*}
with change of variables $x=v^2+a_{m,n}$. Therefore,
\[ \frac{j}{n} \leqslant \frac{\sqrt{b_{m,n}-a_{m,n}}}{\pi}\sqrt{\gamma_j^{m,n}-a_{m,n}}.\]
Then 
\begin{equation}\label{eq_distance_gamma_bord_gauche}
\sqrt{\gamma_j^{m,n}} \geqslant \sqrt{\gamma_j^{m,n}-a_{m,n}} \geqslant
\frac{\pi}{\sqrt{b_{m,n}-a_{m,n}}}\frac{j}{n} \geqslant \frac{\pi}{2\sqrt{b_{m,n}-a_{m,n}}}, 
\end{equation}
as $j \geqslant \frac{n}{2}$.
Therefore, as $1-\frac{j}{n}\geqslant \eta$,
\begin{equation}\label{eq_distance_gamma_bord_droit}
b_{m,n}-\gamma_j^{m,n} \geqslant \big(\tfrac{3}{4}\big)^{2/3}\frac{\pi^2}{b_{m,n}-a_{m,n}}\eta^{2/3}.
\end{equation}
As a consequence, a lower bound on $\mu_{\gamma_j^{m,n}+\frac{u}{n}}-\mu_{\gamma_j^{m,n}}$ is achieved.
\begin{align*}
\mu_{\gamma_j^{m,n}+\frac{u}{n}}-\mu_{\gamma_j^{m,n}} & \geqslant \frac{C}{b_{m,n}(b_{m,n}-a_{m,n})}\eta^{1/3}\frac{u}{n},
\end{align*}
where $C>0$ is a universal constant.
As $\frac{m}{n} \leqslant A_2$,
\[\mu_{\gamma_j^{m,n}+\frac{u}{n}}-\mu_{\gamma_j^{m,n}} \geqslant \frac{C}{
4\sqrt{A_2}(1+\sqrt{A_2})^2}\eta^{1/3}\frac{u}{n}=C(A_2,\eta)\frac{u}{n}. \]
Then
\[\P\Big(\lambda_j> \gamma_j^{m,n}+\frac{u}{n}\Big)  \leqslant 
\P\big(|\mathcal{N}_{\gamma_j^{m,n}+\frac{u}{n}}-n\mu_{\gamma_j^{m,n}+\frac{u}{n}}|>C(A_2,\eta)u\big). \]
This is true for all $u \leqslant n(b_{m,n}-\gamma_j^{m,n})$. From \eqref{eq_distance_gamma_bord_droit}, this will be true when $u \leqslant c'n$ where $c'>0$ depends only on $A_2$ and $\eta$.
If $u \geqslant c=\frac{2C_1}{C(A_2,\eta)}$, then 
\[ \P\Big(\lambda_j> \gamma_j^{m,n}+\frac{u}{n}\Big)  \leqslant \P\big(|\mathcal{N}_{\gamma_j^{m,n}+\frac{u}{n}}-n\mu_{\gamma_j^{m,n}+\frac{u}{n}}|>\tfrac{1}{2}C(A_2,\eta)u+C_1\big).\]
Consequently, from \eqref{eq_inequality_counting_function_lue}, we get
\[\P\big(\lambda_j>\gamma_j^{m,n}+\frac{u}{n} \big)  \leqslant 2\exp\Big(-\frac{u^2}{2\sigma_{\gamma_j^{m,n}+\frac{u}{n}}^2+u}\Big). \]
For $u \leqslant c'n$ (with a maybe smaller $c'>0$ depending only on $\eta$ and $A_2$), there exists $\delta>0$ depending on $\eta$ and $A_2$ such that $\gamma_j^{m,n}+\frac{u}{n} \in I_{\delta}$. Consequently, from \eqref{sup_variance_lue}, $\sigma_{\gamma_j^{m,n}+\frac{u}{n}}^2 \leqslant c_{\delta}\log n$. Then, for $c \leqslant u \leqslant c'n$,
\[\P\big(\lambda_j> \gamma_j^{m,n}+\frac{u}{n}\big)  \leqslant 2\exp\Big(-\frac{u^2}{2c_{\delta}\log n+u}\Big). \]
Repeating the argument leads to the same bound on $\P\big(\lambda_j <\gamma_j^{m,n}- \frac{u}{n}\big)$.
Therefore,
\begin{equation*}
\P\Big(|\lambda_j-\gamma_j^{m,n}|\geqslant \frac{u}{n}\Big)\leqslant 4\exp\Big(-\frac{C^2u^2
}{2c_{\delta}\log n+Cu}\Big).
\end{equation*}
The case when $j \leqslant \frac{n}{2}$ is treated similarly.
The proposition is thus established.

\end{proof}
We turn now to the proof of Theorem \ref{thm_LUE_bulk}.
\begin{proof}[Proof of Theorem \ref{thm_LUE_bulk}]
Note first that, for every $i$,
\begin{equation*}
\esp[\lambda_i^{4}] \leqslant \sum_{j=1}^n\esp[\lambda_j^{4}]=\esp[\Tr(S_{m,n}^4)].
\end{equation*}
From Hölder inequality,
\begin{equation}\label{trace_bound}
 \esp[\Tr(S_{m,n}^4)] \leqslant \frac{1}{n^4} \sum_{\substack{j_1,\dots,j_4 \in [\![1,m]\!]\\ i_1,\dots,i_4 [\![1,n]\!]}} \big(\esp[|X_{i_1,j_1}|^8]\big)^{1/8} \dots
\big(\esp[|X_{i_4,j_1}|^8]\big)^{1/8}.
\end{equation}
As $S_{m,n}$ is from the LUE, the $8^{\textrm{th}}$ moment of its entries is $\esp[|X_{i,j}|^8]=105$. Then
\[\esp[\Tr(S_{m,n}^4)] \leqslant 105m^4 \leqslant 105A_2^4n^4. \]
Consequently, for all $n \geqslant 1$, for all $1 \leqslant i \leqslant n$,
\begin{equation}\label{bound_moment_LUE}
\esp[\lambda_i^{4}] \leqslant 105A_2^4n^4.
\end{equation}
Consider constants $C$, $c$, $c'$ and $\delta$ given by Proposition \ref{prop_deviation_lue_bulk}.
Choose next $M>0$ large enough such that
$\frac{C^2M^2}{2c_{\delta}+CM}>8$. M depends only on $\eta$ and $A_2$. Setting $Z=n|\lambda_j-\gamma_j^{m,n}|$,
\begin{align*}
\esp[Z^2] & = \int_0^{\infty}\P(Z\geqslant v)2v\,dv\\
 & = \int_0^{c}\P(Z\geqslant v)2v\,dv + \int_{c}^{M\log
n}\P(Z\geqslant v)2v\,dv + \int_{M\log n}^{\infty}\P(Z\geqslant v)2v\,dv\\
 & \leqslant c^2 + I_1 + I_2.
\end{align*}
The two latter integrals are handled in different ways. The first one $I_1$
is bounded using \eqref{eq_deviation_lue_bulk} while $I_2$ is controlled using 
the Cauchy-Schwarz inequality and \eqref{bound_moment_LUE}.
Starting thus with $I_2$,
\begin{align*}
I_2 & = \int_{M\log n}^{+\infty}\P(Z\geqslant v)2v\,dv\\
 & \leqslant \esp\big[Z^2\mathbbm{1}_{Z \geqslant M\log n}\big]\\
 & \leqslant \sqrt{\esp\big[Z^{4}\big]} \sqrt{\P\big(Z \geqslant M\log n\big)}\\
 & \leqslant An^{4}\sqrt{\P\Big(|\lambda_j-\gamma_j^{m,n}|\geqslant \frac{M\log n}{n}\Big)}\\
 & \leqslant 2An^{4}\exp\Big(-\frac{1}{2}\frac{C^2M^2}{2c_{\delta}+CM}\log n\Big)\\
 & \leqslant 2A
\exp\bigg(\frac{1}{2}\Big(8-\frac{C^2M^2}{2c_{\delta}+CM}\Big)\log n\bigg),
\end{align*}
where $A>0$ is a numerical constant. As
$\exp\Big(\frac{1}{2}\big(8-\frac{C^2M^2}{2c_{\delta}+CM}\big)\log n\Big)
\underset{n \to
\infty}{\to} 0$, there exists a constant $C>0$ (depending only on $\eta$ and $A_2$) such that \[I_2 \leqslant
C.\]
Turning to $I_1$, recall that Proposition \ref{prop_deviation_lue_bulk} gives, for $c \leqslant v
\leqslant c'n$, 
\[P(Z\geqslant v) =
\P\Big(|\lambda_j-\gamma_j^{m,n}|\geqslant \frac{v}{n}\Big) \leqslant 4\exp\Big(-\frac{C^2v^2
}{2c_{\delta}\log
n+Cv}\Big).\] 
Hence in the range $v \leqslant M\log n$, 
\[P(Z\geqslant v) \leqslant
4\exp\Big(-\frac{B}{\log n}v^2\Big),\]
where $B=B(A_2,\eta)=\frac{C^2}{2c_{\delta}+CM}$.
As a consequence,
 \[I_1 \leqslant 4\int_{c}^{M\log
n}\exp\Big(-\frac{B}{\log n}v^2\Big)2v\,dv \leqslant
\frac{4\log n}{B}\int_0^{\infty}e^{-v^2}2v\,dv.\]
There exists thus a constant $C>0$ (depending only on $\eta$ and $A_2$) such that 
\[I_1 \leqslant C\log n.\]
Summarizing the previous steps, $\esp\big[Z^2\big] \leqslant C\log n$. Therefore
\[\esp\big[|\lambda_j-\gamma_j^{m,n}|^2\big]\leqslant
C\frac{\log n}{n^2}, \]
$C$ depending only on $A_1$, $A_2$ and $\eta$, which is the claim. The proof of Theorem \ref{thm_LUE_bulk} is complete.
\end{proof}

\subsection{Between the bulk and the edge of the spectrum}\label{section_intermediate_LUE}
The aim of this section is to prove an analogous theorem for some eigenvalues between the bulk and the right edge of the spectrum, i.e. for $\lambda_j$ such that $(1-\eta)n \leqslant j \leqslant n-K\log n$. The precise statement is the following.
\begin{thm_e}\label{thm_LUE_intermediate}
There exists a constant $\kappa>0$ (depending on $A_1$ and $A_2$) such that the following holds. For all $K>\kappa$, for all $0<\eta \leqslant \frac{1}{2}$, there exists a constant $C>0$~(depending on $K$, $\eta$, $A_1$ and $A_2$) such that for all covariance matrix $S_{m,n}$, for all $(1-\eta)n \leqslant j \leqslant n-K\log n$,
\begin{equation}
\esp[(\lambda_j-\gamma_j^{m,n})^2] \leqslant C\frac{\log (n-j)}{n^{4/3}(n-j)^{2/3}}.
\end{equation}
In particular,
\begin{equation}
 \Var(\lambda_j) \leqslant C\frac{\log (n-j)}{n^{4/3}(n-j)^{2/3}}.
\end{equation}
\end{thm_e}
As for eigenvalues in the bulk, the proof relies on the determinantal structure of LUE matrices. Recall that this structure together with a bound on the mean counting function \eqref{eq_gotze_tikho_covariance} leads to the following deviation inequality for the counting function $\Ncal_t$.
\[ \P\big(|\Ncal_t-n\mu_t|\geqslant u+C_1\big)\leqslant
2\exp\Big(-\frac{u^2}{2\sigma_t^2+u}\Big).\]
Among Su's results (see \cite{Su_2006_fluctuations}), for every $\tilde{\delta} >0$, for every $\tilde{K}>0$,
there exists $c_{\tilde{\delta},\tilde{K}}>0$ such that for all $t$ satisfying $0< b_{m,n}-t < \tilde{\delta}$ and $n(b_{m,n}-t)^{3/2} \geqslant \tilde{K}\log n$,
\begin{equation}\label{sup_variance_lue_interm}
 \sigma_t^2 \leqslant c_{\tilde{\delta},\tilde{K}}\log n(b_{m,n}-t)^{3/2}.
\end{equation}
Combining \eqref{eq_inequality_counting_function_lue} and \eqref{sup_variance_lue_interm}, deviation inequalities for individual intermediate eigenvalues are then available.
\begin{prop}\label{prop_deviation_lue_interm}
Assume that $1 < A_1 \leqslant \frac{m}{n} \leqslant A_2$. There exists $\kappa>0$ depending only on $A_1$ and $A_2$ such that the following holds. Let $K>\kappa$ and $0< \eta \leqslant \frac{1}{2}$.
Then there exist positive constants $C$, $c$, $C'$ and $c'$ such that the following holds. For any LUE matrix $S_{m,n}$, for all $(1-\eta) n \leqslant j \leqslant n-K\log n$, for all $c \leqslant u \leqslant c'n$,
\begin{equation}\label{eq_deviation_lue_interm}
\P\big(|\lambda_j-\gamma_j^{m,n}| \geqslant \frac{u}{n^{2/3}(n-j)^{1/3}}\big) \leqslant 4 \exp\Big(-\frac{C^2u^2}{C'\log (n-j)+Cu}\Big).
\end{equation}
The constants $C$, $C'$ and $c'$ depend only on $K$, $\eta$ and $A_2$, whereas the constant $c$ depends only on $K$, $\eta$, $A_1$ and $A_2$.
\end{prop}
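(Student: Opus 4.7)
The plan is to follow the strategy of Proposition~\ref{prop_deviation_lue_bulk}, replacing the bulk density lower bound and the bulk variance estimate \eqref{sup_variance_lue} by their edge-sensitive counterparts. Setting $t = u/(n^{2/3}(n-j)^{1/3})$ and using $\mu_{\gamma_j^{m,n}} = j/n$, one gets, exactly as in the bulk case,
\[ \P\big(\lambda_j > \gamma_j^{m,n} + t\big) \leqslant \P\big(\big|\Ncal_{\gamma_j^{m,n}+t} - n\mu_{\gamma_j^{m,n}+t}\big| > n(\mu_{\gamma_j^{m,n}+t} - \mu_{\gamma_j^{m,n}})\big). \]
The task then reduces to producing a sharp lower bound on $n(\mu_{\gamma_j^{m,n}+t} - \mu_{\gamma_j^{m,n}})$ and a matching upper bound on $\sigma^2_{\gamma_j^{m,n}+t}$, and to plugging both into \eqref{eq_inequality_counting_function_lue}.

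For the increment of $\mu$, I would repeat the elementary manipulation from the bulk proof but this time retain the factor $\sqrt{b_{m,n}-x}$, since now $\gamma_j^{m,n}$ lies close to the upper edge. Replacing $\sqrt{x - a_{m,n}}$ by $\sqrt{\gamma_j^{m,n} - a_{m,n}}$, $1/x$ by $1/b_{m,n}$ on $[\gamma_j^{m,n}, \gamma_j^{m,n}+t]$, and applying the inequality $1 - (1-s)^{3/2} \geqslant s$ on $[0,1]$, one obtains
\[ \mu_{\gamma_j^{m,n}+t} - \mu_{\gamma_j^{m,n}} \geqslant \frac{\sqrt{\gamma_j^{m,n} - a_{m,n}}}{3\pi\, b_{m,n}} \, t \, \sqrt{b_{m,n} - \gamma_j^{m,n}}, \]
valid for $t \leqslant b_{m,n} - \gamma_j^{m,n}$. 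Invoking \eqref{eq_distance_gamma_bord_droit_dependant_j} to estimate $b_{m,n} - \gamma_j^{m,n} \geqslant C((n-j)/n)^{2/3}$, and using that $\gamma_j^{m,n} - a_{m,n}$ is bounded below by a positive constant (by \eqref{eq_distance_gamma_bord_gauche}, since $j \geqslant (1-\eta)n \geqslant n/2$), the chosen scaling of $t$ produces $n(\mu_{\gamma_j^{m,n}+t} - \mu_{\gamma_j^{m,n}}) \geqslant C(A_1, A_2, \eta)\, u$. For $u \geqslant c := 2C_1/C$ this absorbs the additive constant $C_1$ from \eqref{eq_gotze_tikho_covariance}, and \eqref{eq_inequality_counting_function_lue} yields a bound of the form $2\exp\bigl(-(Cu/2)^2/(2\sigma^2_{\gamma_j^{m,n}+t} + Cu/2)\bigr)$.

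The remaining input is the variance bound \eqref{sup_variance_lue_interm}. Provided $u$ is at most a small constant multiple of $n-j$, one has $b_{m,n} - \gamma_j^{m,n} - t \geqslant \tfrac12(b_{m,n} - \gamma_j^{m,n}) \geqslant C((n-j)/n)^{2/3}$, and hence $n(b_{m,n} - \gamma_j^{m,n} - t)^{3/2} \geqslant C(n-j)$. The hypothesis $n(b_{m,n} - t)^{3/2} \geqslant \tilde K \log n$ of \eqref{sup_variance_lue_interm} is therefore satisfied as soon as $n - j \geqslant C^{-1} \tilde K \log n$, and this is precisely where the assumption $j \leqslant n - K \log n$ with $K$ above a threshold $\kappa = \kappa(A_1, A_2)$ comes in. Under this condition \eqref{sup_variance_lue_interm} delivers $\sigma^2_{\gamma_j^{m,n}+t} \leqslant C' \log(n-j)$, and substituting back yields the announced exponential bound. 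The lower-tail probability $\P(\lambda_j < \gamma_j^{m,n} - t)$ is handled symmetrically; no new difficulty arises since $\gamma_j^{m,n}$ stays a fixed positive distance from $a_{m,n}$.

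The main obstacle is the simultaneous bookkeeping of three conditions on the admissible range of $u$ and $j$: (i) $\gamma_j^{m,n} + t \leqslant b_{m,n}$, which underlies the density lower bound; (ii) applicability of \eqref{sup_variance_lue_interm} via $n(b_{m,n} - \gamma_j^{m,n} - t)^{3/2} \geqslant \tilde K \log n$; and (iii) $u \geqslant 2C_1/C$ for the absorption of $C_1$. The threshold $\kappa$ is dictated by (ii), while the ceiling $c' n$ on $u$ comes from (i) together with the scaling of $t$, both with constants depending only on the parameters listed in the statement.
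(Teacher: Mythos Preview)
Your proposal is correct and follows essentially the same argument as the paper, including the reduction to the counting function, the density lower bound via \eqref{eq_distance_gamma_bord_droit_dependant_j} and \eqref{eq_distance_gamma_bord_gauche}, and the edge variance estimate \eqref{sup_variance_lue_interm}. The only steps you leave implicit are the verification of the side condition $0<b_{m,n}-t<\tilde\delta$ in \eqref{sup_variance_lue_interm} (which the paper checks using $j\geqslant(1-\eta)n$) and the matching \emph{upper} bound $n(b_{m,n}-t)^{3/2}\leqslant C(n-j)$ needed to convert Su's bound into $C'\log(n-j)$; note also that the argument naturally produces a ceiling $u\leqslant c'(n-j)$ rather than $c'n$, which is what the paper actually uses downstream.
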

Note that this proposition still holds when  $m=n$, for eigenvalues on the right-side of the spectrum. The proof of Proposition \ref{prop_deviation_lue_interm} is very similar to what was done for eigenvalues in the bulk. Therefore some details are not reproduced.
\begin{proof}
Let $\eta>0$, $K>0$ and $u \geqslant 0$. Assume that $(1-\eta)n\leqslant j \leqslant n-K\log n$. Set $u_{n,j}=\frac{u}{n^{2/3}(n-j)^{1/3}}$. As for the bulk case, we start with estimating the probability that $\lambda_j$ is greater than $\gamma_j^{m,n}+u_{n,j}$. We get
\begin{equation*}
\P\big(\lambda_j >\gamma_j^{m,n}+ u_{n,j}\big)  \leqslant
\P\big(|\Ncal_{\gamma_j^{m,n}+u_{n,j}}-n\mu_{\gamma_j^{m,n}+u_{n,j}}|> n(\mu_{\gamma_j^{m,n}+u_{n,j}}-\mu_{\gamma_j^{m,n}} )\big).
\end{equation*}
Furthermore,
\begin{align*}
 \mu_{\gamma_j^{m,n}+u_{n,j}}-\mu_{\gamma_j^{m,n}} 
    & \geqslant \frac{\sqrt{\gamma_j^{m,n}-a_{m,n}}}{3\pi b_{m,n}}(b_{m,n}-\gamma_j^{m,n})^{1/2}u_{n,j},
\end{align*}
if $\gamma_j^{m,n}+u_{n,j} \leqslant b_{m,n}$. From \eqref{eq_distance_gamma_bord_droit_dependant_j},
\begin{equation*}
b_{m,n}-\gamma_j^{m,n} \geqslant \Big(\frac{3\pi}{\sqrt{b_{m,n}-a_{m,n}}}\gamma_j^{m,n}\big(\frac{n-j}{n}\big)\Big)^{2/3}.
\end{equation*}
Moreover, as $\eta \leqslant \frac{1}{2}$, from \eqref{eq_distance_gamma_bord_gauche},
\[\sqrt{\gamma_j^{m,n}} \geqslant \sqrt{\gamma_j^{m,n}-a_{m,n}} \geqslant
\frac{\pi}{2\sqrt{b_{m,n}-a_{m,n}}}. \]
Therefore
\begin{equation}\label{eq_distance_gamma_bord_droit_interm}
b_{m,n}-\gamma_j^{m,n} \geqslant \big(\tfrac{3}{4}\big)^{2/3}\frac{\pi^2}{b_{m,n}-a_{m,n}}\big(\frac{n-j}{n}\big)^{2/3},
\end{equation}
and
\begin{align*}
\mu_{\gamma_j^{m,n}+u_{n,j}}-\mu_{\gamma_j^{m,n}} & \geqslant \frac{C}{b_{m,n}(b_{m,n}-a_{m,n})}\frac{u}{n},
\end{align*}
where $C>0$ is a universal constant.
As $\frac{m}{n} \leqslant A_2$,
\[\mu_{\gamma_j^{m,n}+u_{n,j}}-\mu_{\gamma_j^{m,n}} \geqslant \frac{C}{
4\sqrt{A_2}(1+\sqrt{A_2})^2}\frac{u}{n}=C(A_2)\frac{u}{n}. \]
Similarly to the bulk case, we get
\[\P(\lambda_j>\gamma_j^{m,n}+u_{n,j} )  \leqslant 2\exp\Big(-\frac{u^2}{2\sigma_{\gamma_j^{m,n}+u_{n,j}}^2+u}\Big). \]
This relation holds if $c \leqslant u \leqslant n^{2/3}(n-j)^{1/3}(b_{m,n}-\gamma_j^{m,n})$, with $c$ depending only on $A_1$ and $A_2$. Let $\alpha \in (0,1)$. Set $c'=\alpha(\frac{3}{4})^{2/3}\frac{\pi^2}{4\sqrt{A_2}}$, depending only on $\alpha$ and $A_2$. If $u \leqslant c'(n-j)$, then, due to \eqref{eq_distance_gamma_bord_droit_interm}, the preceding relation holds. The bound \eqref{sup_variance_lue_interm} on $\sigma_{t_n}^2$ obtained by Su holds when $0<b_{m,n}-t_n \leqslant \tilde{\delta}$ and $n(b_{m,n}-t_n)^{3/2} \geqslant \tilde{K}\log n$. Set
$t_n=\gamma_j^{m,n}+u_{n,j}$. As $u \geqslant 0$, $0 < b_{m,n}-t_n \leqslant b_{m,n}-\gamma_j^{m,n}$. Therefore, as
$j \geqslant (1-\eta)n$, similar computations as for \eqref{eq_distance_gamma_bord_droit} lead to
\[b_{m,n}-t_n \leqslant
\Big(\frac{3b_{m,n}\sqrt{b_{m,n}-a_{m,n}}}{1-\eta}\eta\Big)^{2/3} \leqslant  \Big(\frac{6(A_2)^{1/4}(1+\sqrt{A_2})^2}{1-\eta}\eta\Big)^{2/3}=\tilde{\delta}\] for
all $n$. Moreover,
\begin{align*}
n(b_{m,n}-t_n)^{3/2} & = n(b_{m,n}-\gamma_j^{m,n})^{3/2}\Big(1-\frac{u_{n,j}}{b_{m,n}-\gamma_j^{m,n}}\Big)^{3/2}\\
  & \geqslant \frac{3\pi^3}{4(b_{m,n}-a_{m,n})^{3/2}}(n-j)\Big(1-\frac{c'(n-j)^{2/3}}{n^{2/3}(b_{m,n}-\gamma_j^{m,n})}\Big)^{3/2}\\
 & \geqslant \frac{3\pi^3}{4(4\sqrt{A_2})^{3/2}}(1-\alpha)^{3/2}K\log n\\
 & \geqslant \tilde{K}\log n,
\end{align*}
where $\tilde{K}=\frac{3\pi^3}{4(4\sqrt{A_2})^{3/2}}(1-\alpha)^{3/2}K>0$.
From \eqref{sup_variance_lue_interm}, for all $c \leqslant u \leqslant c'(n-j)$,
\[\Var(\Ncal_{\gamma_j^{m,n}+ u_{n,j}}) 
    \leqslant c_{\tilde{\eta},\tilde{K}}\log\big(n(b_{m,n}-t_n)^{3/2}\big).\] But
\begin{align*}
n(b_{m,n}-t_n)^{3/2} & \leqslant n\big(b_{m,n}-\gamma_j^{m,n}\big)^{3/2}.
\end{align*}
Using the same techniques as for \eqref{eq_distance_gamma_bord_droit_dependant_j}, it is possible to show that
\[(b_{m,n}-\gamma_j^{m,n})^{3/2} \leqslant 6b_{m,n}\sqrt{b_{m,n}-a_{m,n}}\frac{n-j}{n} \leqslant 12(A_2)^{1/4}\big(1+\sqrt{A_2}\big)^2\frac{n-j}{n}. \]
Hence $\log\big(n(b_{m,n}-t_n)^{3/2}\big) 
\leqslant \log(n-j)+\log(12(A_2)^{1/4}(1+\sqrt{A_2})^2)$. For $K>\kappa$ with $\kappa$ large enough depending only on $A_2$ and for $n \geqslant 2$, $n-j \geqslant K\log n \geqslant 12(A_2)^{1/4}(1+\sqrt{A_2})^2$ and $\Var(\Ncal_{\gamma_j^{m,n}+
u_{n,j}}) \leqslant 2c_{\tilde{\delta},\tilde{K}}\log(n-j)$.
Therefore
\[\P\big(\lambda_j >\gamma_j^{m,n}+ u_{n,j}\big) \leqslant 2\exp\Big(-\frac{C^2u^2
}{4c_{\tilde{\delta},\tilde{K}}\log(n-j)+Cu}\Big).\]
The proof is concluded similarly to Proposition \ref{prop_deviation_lue_bulk}.

\end{proof}
We turn now to the proof of Theorem \ref{thm_LUE_intermediate}, in which some details are skipped, due to the similarity with the proof of Theorem \ref{thm_LUE_bulk}.
\begin{proof}[Proof of Theorem \ref{thm_LUE_intermediate}]
Setting $Z=n^{2/3}(n-j)^{1/3}|\lambda_j-\gamma_j^{m,n}|$,
\begin{align*}
\esp[Z^2] & = \int_0^{\infty}\P(Z\geqslant v)2v\,dv\\
 & = \int_0^{c}\P(Z\geqslant v)2v\,dv + \int_{c}^{\frac{C'}{C}\log(n-j)}\P(Z\geqslant v)2v\,dv\\
 & +
\int_{\frac{C'}{C}\log(n-j)}^{c'(n-j)}\P(Z\geqslant v)2v\,dv + \int_{c'(n-j)}^{\infty}\P(Z\geqslant
v)2v\,dv\\
 & \leqslant c^2 + J_1 + J_2+J_3,
\end{align*}
where $c$, $c'$, $C$ and $C'$ are given by Proposition \ref{prop_deviation_lue_interm}.
Repeating the computations carried out with $I_2$ in the proof of Theorem \ref{thm_LUE_bulk} yields
\begin{align*}
J_3  & \leqslant 2n^{4/3}(n-j)^{2/3} \sqrt{\esp[(\lambda_j-\gamma_j^{m,n})^4]}\exp\Big(-\frac{1}{2} \frac{C^2C'^2(n-j)^2}{C'\log(n-j)+Cc'(n-j)}\Big)\\
 & \leqslant 2An^{4}\exp\Big(-\frac{1}{2}\frac{C^2c'^2(n-j)^2}{C'\log(n-j)+Cc'(n-j)}\Big),
\end{align*}
where $A>0$ is a numerical constant. The last inequality is due to \eqref{bound_moment_LUE}. For $n$ large enough (depending on $\eta$, $A_2$ and $K$), $C'\log(n-j)
\leqslant Cc'(n-j)$ and
\[ J_3 \leqslant
2A\exp\Big(4\log n-\frac{Cc'}{4}(n-j)\Big).\]
Then, as $n-j \geqslant K\log n$,
\[J_3 \leqslant 2A\exp\Big(\big(4-\frac{KCc'}{4}\big)\log n\Big).\] Recall from the proof of Proposition \ref{prop_deviation_lue_interm} that $c'=\alpha c'(A_2)$ where $\alpha \in (0,1)$ is a universal constant and $c'(A_2)$ depends only on $A_2$. Furthermore, the constant $C$ depends only on $A_2$. Therefore, if we choose $\kappa>0$ such that $\kappa>\frac{16}{Cc'(A_2)}$, then $\frac{KCc'}{4} > 4$. The right-hand side goes thus to $0$ when $n$ goes to infinity. As a consequence, there exists a constant $C>0$ depending only on $A_2$, $\eta$ and $K$ such that \[J_3 \leqslant C.\]
The integral $J_1$ is handled as $I_1$, using that, in the range $v \leqslant \frac{C'}{C}\log(n-j)$, 
 \[P(Z\geqslant v) \leqslant
 4\exp\Big(-\frac{B}{\log (n-j)}v^2\Big),\]
 where $B$ depends only on $K$, $\eta$ and $A_2$ (this is due to Proposition
 \ref{prop_deviation_lue_interm}).
Hence, there exists a constant $C$ depending only on $A_2$, $\eta$ and $K$ such that
\[J_1 \leqslant C\log(n-j).\]
Finally, $J_2$ is handled similarly. In the
range $\frac{C'}{C}\log(n-j) \leqslant v \leqslant c'(n-j)$, from Proposition \ref{prop_deviation_lue_interm},
\[P(Z\geqslant v) \leqslant 4\exp\Big(-\frac{C}{2}v\Big).\]
Thus
\[J_2 \leqslant 4\int_{\frac{C'}{C}\log(n-j)}^{c'(n-j)}\exp\Big(-\frac{C}{2}v\Big)2v\,dv \leqslant
4\int_{0}^{\infty}\exp\Big(-\frac{C}{2}v\Big)2v\,dv.\]
Then $J_2$ is bounded by a constant, which depends only on $A_2$. 
There exists thus a constant $C>0$ such that 
\[J_2 \leqslant C.\]
Summarizing the previous steps, $\esp[Z^2] \leqslant C\log (n-j)$, where $C$ depends only on $A_1$, $A_2$, $\eta$ and $K$. Therefore
\[\esp\big[|\lambda_j-\gamma_j|^2\big]\leqslant
C\frac{\log (n-j)}{n^{4/3}(n-j)^{2/3}}, \]
which is the claim.
\end{proof}

\subsection{At the edge of the spectrum}\label{section_LUE_edge}
In \cite{LeRi_2010_deviations}, Ledoux and Rider gave unified proofs of precise small deviation inequalities
for the largest eigenvalues of $\beta$-ensembles. The results hold in particular for LUE matrices ($\beta=2$)
and for LOE matrices ($\beta=1$). The following theorem summarizes some of the relevant inequalities for the
LUE.

\begin{thm_e} \cite{LeRi_2010_deviations}
Let $A_1>1$. There exists a constant $C>0$ depending only on $A_1$ such that the following holds. Let $S_{m,n}$ be a LUE matrix. Denote by $\lambda_n$ the maximal eigenvalue of $S_{m,n}$. Then, for all $n \in \mathbb{N}$, for all $m \in \mathbb{N}$ such that $m>A_1n$ and for all $0< \varepsilon\leqslant 1$,
\begin{equation}
 \P\big(\lambda_{n} \leqslant b_{m,n}(1-\varepsilon)\big) \leqslant C^2\exp\Big(-\frac{2
}{C}n^2\varepsilon^3\Big),
\end{equation}
and
\begin{equation}
 \P\big(\lambda_{n} \geqslant b_{m,n}(1+\varepsilon)\big) \leqslant
C\exp\Big(-\frac{2}{C}n\varepsilon^{3/2}\Big).
\end{equation}
\end{thm_e}                               
The large deviation tails are also known. The following corollary can be deduced by integrating these inequalities. 

\begin{cor}\label{thm_LUE_edge}\cite{LeRi_2010_deviations}
Let $S_{m,n}$ be a LUE matrix. Then there exists a universal
 constant $C>0$ such that for all $ n \geqslant 1$, for all $m \in \mathbb{N}$ such that $m>A_1n$,
\[ \Var(\lambda_{n}) \leqslant \esp\big[(\lambda_n-b_{m,n})^2\big] \leqslant  Cn^{-4/3}.\]
\end{cor}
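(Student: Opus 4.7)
The plan is to recover $\esp[(\lambda_n-b_{m,n})^2]$ from the tail probabilities of $\lambda_n$ via the layer-cake identity, and then feed in the two small-deviation inequalities of the preceding theorem. Concretely, I would start from
\[\esp\big[(\lambda_n-b_{m,n})^2\big]=\int_0^{+\infty}\P\big(|\lambda_n-b_{m,n}|\geqslant s\big)\,2s\,ds,\]
and perform the change of variable $s=b_{m,n}\varepsilon$. This produces a prefactor $2b_{m,n}^2$, which is controlled by $A_1$ (and $A_2$ in the regime considered by the paper), and reshapes the event so that $\{|\lambda_n-b_{m,n}|\geqslant b_{m,n}\varepsilon\}$ splits as $\{\lambda_n\leqslant b_{m,n}(1-\varepsilon)\}\cup\{\lambda_n\geqslant b_{m,n}(1+\varepsilon)\}$, which is exactly the form handled by the theorem.

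For the left-tail contribution, note that $\lambda_n\geqslant 0$, so the integral is effectively over $\varepsilon\in[0,1]$; inserting the bound $C^2\exp(-\tfrac{2}{C}n^2\varepsilon^3)$ and substituting $u=n^{2/3}\varepsilon$ yields
\[\int_0^1 C^2\exp\!\big(-\tfrac{2}{C}n^2\varepsilon^3\big)\varepsilon\,d\varepsilon=\frac{C^2}{n^{4/3}}\int_0^{n^{2/3}}\exp\!\big(-\tfrac{2}{C}u^3\big)u\,du\leqslant\frac{C'}{n^{4/3}}.\]
For the right-tail contribution on $[0,1]$, I would use the bound $C\exp(-\tfrac{2}{C}n\varepsilon^{3/2})$, together with the same substitution $u=n^{2/3}\varepsilon$, which turns $n\varepsilon^{3/2}$ into $u^{3/2}$ and again produces the scaling $O(n^{-4/3})$ after integration. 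So far the computation is routine once the change of variable has been performed.

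The only genuinely delicate step, and therefore the main obstacle, is the right-tail contribution for $\varepsilon>1$, since the small-deviation estimate of the theorem is stated only for $0<\varepsilon\leqslant 1$. For this I would invoke the large-deviation tail for $\lambda_n$ (alluded to in the paper by the phrase \emph{the large deviation tails are also known}, and available in \cite{LeRi_2010_deviations}), which provides a bound of the form $\P(\lambda_n\geqslant b_{m,n}(1+\varepsilon))\leqslant C\exp(-cn\varepsilon^{3/2})$ valid for all $\varepsilon\geqslant 1$. Integrating this against $\varepsilon\,d\varepsilon$ on $[1,+\infty)$ produces a term of order $e^{-cn}$, negligible compared with $n^{-4/3}$. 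If such a large-deviation estimate were not directly available, one could replace this step by a moment bound obtained from $\esp[\Tr(S_{m,n}^k)]$ (as used already in the proof of Theorem \ref{thm_LUE_bulk}) combined with Markov's inequality, which is enough to tame the tail beyond $\varepsilon=1$. Collecting the three contributions and absorbing the prefactor $b_{m,n}^2$ gives $\esp[(\lambda_n-b_{m,n})^2]\leqslant Cn^{-4/3}$, and the variance bound follows immediately since $\Var(\lambda_n)\leqslant\esp[(\lambda_n-b_{m,n})^2]$.
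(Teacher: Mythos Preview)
Your proposal is correct and follows exactly the route the paper indicates: the paper does not spell out a proof but simply says ``the large deviation tails are also known'' and that the corollary ``can be deduced by integrating these inequalities'', which is precisely the layer-cake computation you describe, including the use of the large-deviation right tail from \cite{LeRi_2010_deviations} to handle $\varepsilon>1$. Your remark about needing to control $b_{m,n}^2$ via the bound on $m/n$ is a correct observation about how the constant arises.
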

Similar results are probably true for the $k^{\textrm{th}}$ largest eigenvalue (for $k \in
\mathbb{N}$ fixed). The authors established also a left-side deviation inequality for the smallest eigenvalue in the case when $m>A_1n$.
\begin{equation}
 \P\big(\lambda_{1} \leqslant a_{m,n}(1-\varepsilon)\big) \leqslant
C\exp\Big(-\frac{2}{C}n\varepsilon^{3/2}\Big),
\end{equation}
for all $0< \varepsilon\leqslant 1$. But no right-side deviation inequality seems to be known for the smallest eigenvalue $\lambda_1$ and therefore we cannot deduce a precise bound on the variance of the smallest eigenvalue.

%


\section[Bounds for covariance matrices]{Variance bounds for families of covariance matrices}\label{section_covariance}
The previously achieved bounds on the variance of eigenvalues for LUE matrices are then extended to families of more general covariance matrices. It is due to the combination of two very recent results, some localization properties established by Pillai and Yin \cite{PiYi_2011_covariance} and to the Four Moment Theorem proved by Tao and Vu \cite{TaVu_2012_covariance} and Wang \cite{Wa_2011_covariance_edge}.

\subsection{Localization properties and the Four Moment Theorem}\label{tools}
This subsection is devoted to the statement of the previously mentioned results which will be used in order to extend variance bounds to large families of non Gaussian covariance matrices. Matrices which are considered in this section are covariance matrices $S_{m,n}$ satisfying condition $(C0)$, defined by the following.
Say that $S_{m,n}$ satisfies condition $(C0)$ if its entries $X_{ij}$ are
independent and have an exponential decay: there are positive constants $B_1$ and $B_2$ such that
\begin{equation*}
\forall \ i \in \{1,\dots,n\}, \forall \ j \in \{1,\dots,m\}, \ \P\big(|X_{ij}|\geqslant
t^{B_1}\big) \leqslant e^{-t}
\end{equation*}
for all $t \geqslant B_2$.

Pillai and Yin proved in \cite{PiYi_2011_covariance} a Localization Theorem similar to the one proved by Erdös, Yau and Yin in \cite{ErYaYi_2010_rigidity}. This theorem establishes that the eigenvalues are highly localized around their theoretical locations $\gamma_j^{m,n}$.
\begin{thm_e}[Localization \cite{PiYi_2011_covariance}]\label{thm_localization_covariance}
Let $S_{m,n}$ be a random covariance matrix whose entries satisfy condition $(C0)$. Suppose that $1<A_1 \leqslant
\frac{m}{n} \leqslant A_2<+\infty$.
There are positive universal constants $c$ and $C$ such that, for any $ 1 \leqslant j \leqslant n $,
\begin{equation}\label{eq_localization_inequality}
\P\Big(|\lambda_j-\gamma_j^{m,n}|\geqslant (\log n)^{C\log\log n}n^{-2/3}\min(j,n+1-j)^{-1/3}\Big)\leqslant
Ce^{-(\log
n)^{c\log\log n}}.
\end{equation}
\end{thm_e}
This deviation inequality \eqref{eq_localization_inequality} can be used to reach an almost optimal bound on the variance. Indeed, due to \eqref{eq_localization_inequality} and the Cauchy-Schwarz inequality, $\Var(\lambda_j)$ may be bounded by $\frac{(\log n)^{2C\log \log n}}{n^2}$ in the bulk of the spectrum, which is almost the right order for the variance. In order to remove the $\log\log n$ term, we turn now to the Four Moment Theorem. This theorem was proved for the bulk of the spectrum by Tao and Vu \cite{TaVu_2012_covariance} and extended to the edge by Wang \cite{Wa_2011_covariance_edge}. From now, we consider covariance matrices $S_{m,n}$ which satisfy condition $(C0)$ and whose entries match the entries of a LUE matrix up to order $4$. Say that two complex random variables $\xi$ and $\xi'$ match to order $k$ if
\begin{equation*}
\esp\big[ \Re(\xi)^m \Im(\xi)^l\big]
    =\esp\big[\Re(\xi')^m \Im(\xi')^l\big]
\end{equation*}
for all $m, l \geqslant 0$ such that $m+l \leqslant k$.

\begin{thm_e}[Four Moment Theorem \cite{TaVu_2012_covariance,Wa_2011_covariance_edge}]\label{thm_4_moments_cov}
There exists a small positive constant $c_0$ such that the following holds.
Let $S_{m,n}=\frac{1}{n}X^*X$ and $S_{m,n}'=\frac{1}{n}X'^*X'$ be
two random covariance matrices satisfying condition $(C0)$. Assume that, for $1\leqslant i \leqslant
n$, $X_{ij}$ and $X'_{ij}$
match to order $4$.
Let $G: \mathbb{R} \to \mathbb{R}$ be a smooth function such that:
\begin{equation}\label{eq_condition_derivees}
\forall \ 0 \leqslant k \leqslant 5, \quad \forall x \in \mathbb{R}, \quad \big|G^{(k)}(x)\big|
\leqslant
n^{c_0}.
\end{equation}\label{eq_four_moment_thm_covariance}
Then, for all $1 \leqslant i \leqslant n$ and for $n$ large
enough (depending on constants $B_1$ and $B_2$ in condition $(C0)$),
\begin{equation}
\big|\esp[G(n\lambda_{i})]-\esp[G(n\lambda_{i}')]\big|\leqslant n^{-c_0}.
\end{equation}
\end{thm_e}
Suppose Theorem \ref{thm_4_moments_cov} apply with $G_j: x \in \R \mapsto (x-n\gamma_j^{m,n})^2$. Then \eqref{eq_four_moment_thm_covariance} writes
\[ \big|n^2\esp[(\lambda_j-\gamma_j^{m,n})^2]-n^2\esp[(\lambda_j'-\gamma_j^{m,n})^2]\big|\leqslant n^{-c_0}. \]
As $\esp[(\lambda_j'-\gamma_j^{m,n})^2]$ is bounded by $\frac{\log n}{n^2}$, $\frac{\log (n-j)}{n^{4/3}(n-j)^{2/3}}$ or $n^{-4/3}$, which are bigger than $n^{-2-c_0}$, the bounds could be extended. Unfortunately, $G_j$ does not satisfy \eqref{eq_condition_derivees}. To get round this difficulty, the Four Moment Theorem \ref{thm_4_moments_cov} is applied to a smooth truncation of $G_j$. The Localization Theorem \ref{thm_localization_covariance} provides a small area around $\gamma_j^{m,n}$ where $\lambda_j$ is very likely to be in so that the error due to the truncation is well controlled. Details are contained in the following subsection.

\subsection{Comparison with LUE matrices}\label{comparison_covariance}
Let $S_{m,n}$ be a covariance matrix and $S_{m,n}'$ be a LUE matrix such that they satisfy the hypotheses of
Theorem \ref{thm_4_moments_cov}. Note that the following procedure is valid for eigenvalues in the bulk and at the edge of the spectrum, as well as for intermediate eigenvalues.

Let $1 \leqslant j \leqslant n$.
Set $R_n^{(j)}=(\log n)^{C\log\log
 n}n^{1/3}\min(j,n+1-j)^{-1/3}$ and $\varepsilon_n=Ce^{-(\log n)^{c\log\log n}}$. Then Theorem
\ref{thm_localization_covariance} leads to:
\begin{equation}\label{eq_short_localization_inequality}
\P\Big(|\lambda_j-\gamma_j^{m,n}| \geqslant \frac{R_n^{(j)}}{n}\Big) \leqslant \varepsilon_n. 
\end{equation}
Let $\psi$ be a smooth function with support $[-2,2]$ and values in $[0,1]$ such that
$\psi(x)=\frac{1}{10}x^2$ for all $x \in [-1; 1]$. Set $G_j: x \in \mathbb{R} \mapsto
\psi\big(\frac{x-n\gamma_j}{R_n^{(j)}}\big)$. We want to apply Tao and Vu's Four Moment Theorem
\ref{thm_4_moments_cov} to $G_j$. As $\psi$ is smooth and has compact support, its first five derivatives are
bounded by $M>0$. Then, for all $0 \leqslant k \leqslant 5$, for all $x \in \mathbb{R}$,
\begin{equation*}
\big|G_j^{(k)}(x)\big|\leqslant \frac{M}{(R_n^{(j)})^k} \leqslant n^{c_0},
\end{equation*}
where the last inequality holds for $n$ large enough (depending only on $M$ and $c_0$). Then, the Four Moment
Theorem \ref{thm_4_moments_cov} yields: 
\begin{equation}\label{eq_application_four_moment_thm_covariance}
\big|\esp[G_j(n\lambda_j)]-\esp[G_j(n\lambda_j')]\big| \leqslant n^{-c_0}
\end{equation}
for large enough $n$. But 
\begin{align*}
\esp[G_j(&n\lambda_j)] \\
& = \tfrac{1}{10}\esp\Big[\Big(\tfrac{n\lambda_j-n\gamma_j^{m,n}}{R_n^{(j)}}\Big)^2\mathbbm{1}_{\frac{
|n\lambda_j-n\gamma_j^{m,n}|}{R_n^{(j)}}\leqslant 1}\Big]
+\esp\Big[G_j(n\lambda_j)\mathbbm{1}_{\frac{
|n\lambda_j-n\gamma_j^{m,n}|}{R_n^{(j)}}> 1}\Big]\\
 & =
\tfrac{n^2}{10(R_n^{(j)})^2}\esp\Big[(\lambda_j-\gamma_j^{m,n})^2\mathbbm{1}_{|\lambda_j-\gamma_j^{m,n}|\leqslant
\frac{R_n^{(j)}}{n}}\Big]+\esp\Big[G_j(n\lambda_j)\mathbbm{1}_{\frac{
|n\lambda_j-n\gamma_j^{m,n}|}{R_n^{(j)}}> 1}\Big].\\
\end{align*}
On the one hand,
\begin{eqnarray*}
 \esp\Big[G_j(n\lambda_j)\mathbbm{1}_{\frac{
|n\lambda_j-n\gamma_j^{m,n}|}{R_n^{(j)}}> 1}\Big] & \leqslant &
\P\big(|n\lambda_j-n\gamma_j^{m,n}|>R_n^{(j)}\big)\\
 & \leqslant & \P\Big(|\lambda_j-\gamma_j^{m,n}|>\frac{R_n^{(j)}}{n}\Big)\\
 & \leqslant & \varepsilon_n.
\end{eqnarray*}
On the other hand,
\begin{equation*}
\esp\Big[(\lambda_j-\gamma_j^{m,n})^2\mathbbm{1}_{
|\lambda_j-\gamma_j^{m,n}|\leqslant\frac{R_n^{(j)}}{n}}\Big] = \esp\big[(\lambda_j-\gamma_j^{m,n})^2\big] -
\esp\Big[(\lambda_j-\gamma_j^{m,n})^2\mathbbm{1}_{|\lambda_j-\gamma_j^{m,n}|>\frac{R_n^{(j)}}{n}}\Big].
\end{equation*}
As condition $(C0)$ is satisfied, the $8$-th moment of the entries is uniformly bounded by a
constant which depends only on constants $B_1$ and $B_2$ in condition $(C0)$. Then, from the Cauchy-Schwarz
inequality and \eqref{trace_bound},
\begin{eqnarray*}
\esp\Big[(\lambda_j-\gamma_j^{m,n})^2\mathbbm{1}_{|\lambda_j-\gamma_j^{m,n}|>\frac{R_n^{(j)}}{n}}\Big] & \leqslant &
\sqrt{\esp\big[(\lambda_j-\gamma_j^{m,n})^4\big]\P\Big(|\lambda_j-\gamma_j^{m,n}|>\tfrac{R_n^{(j)}}{n}\Big)}\\
 & \leqslant &  An^2\sqrt{\varepsilon_n}
\end{eqnarray*}
where $A>0$ is a numerical constant. Then
\begin{eqnarray*}
\esp\big[G_j(n\lambda_j)\big] & = &
\frac{n^2}{10\big(R_n^{(j)}\big)^2}\Big(\esp\big[(\lambda_j-\gamma_j^{m,n})^2\big]+O\big(n^{2}\varepsilon_n^{1/2}
\big)\Big)+O(\varepsilon_n)\\
 & = &
\frac{n^2}{10\big(R_n^{(j)}\big)^2}\esp\big[(\lambda_j-\gamma_j^{m,n})^2\big]+O\big(n^{4}\varepsilon_n^{1/2}
\big(R_n^{(j)}\big)^{-2}
\big)+O(\varepsilon_n).\\
\end{eqnarray*}
Repeating the same computations gives similarly
\begin{equation*}
\esp\big[G_j(n\lambda_j')\big] = 
\frac{n^2}{10\big(R_n^{(j)}\big)^2}\esp\big[(\lambda_j'-\gamma_j^{m,n})^2\big]+O\big(n^{4}\varepsilon_n^{1/2}
\big(R_n^{(j)}\big)^{-2}
\big)+O(\varepsilon_n).\\
\end{equation*}
Then \eqref{eq_application_four_moment_thm_covariance} yields
\[\esp\big[(\lambda_j-\gamma_j^{m,n})^2\big]=\esp\big[(\lambda_j'-\gamma_j^{m,n})^2\big]+O\big(n^{2}\varepsilon_n^{1/2} + n^{-2}(R_n^{(j)})^2\varepsilon_n + (R_n^{(j)})^2n^{-c_0-2}\big). \]
As the first two error terms are smaller than the third one, the preceding equation becomes 
\begin{equation}\label{eq_difference_second_moment_covariance}
\esp\big[(\lambda_j-\gamma_j^{m,n})^2\big]=\esp\big[(\lambda_j'-\gamma_j^{m,n})^2\big]+O\big((R_n^{(j)})^2n^{
-c_0-2}\big).
\end{equation}

\subsection{Variance bounds}\label{bounds_Wishart}
\eqref{eq_difference_second_moment_covariance} is true for all eigenvalue $\lambda_j$. We estimate the error term $O\big((R_n^{(j)})^2n^{-c_0-2}\big)$ differently according to the location of the eigenvalue in the spectrum, in order to get the announced bounds.

\subsubsection{Inside the bulk of the spectrum}\label{bounds_Wishart_bulk}
Let $0< \eta \leqslant \frac{1}{2}$ and $\eta n\leqslant j \leqslant
(1-\eta)n$. From Theorem
\ref{thm_LUE_bulk},
$\esp\big[(\lambda_j'-\gamma_j^{m,n})^2\big] \leqslant C\frac{\log n}{n^2}$. Thus, from
\eqref{eq_difference_second_moment_covariance}, it remains to show that the error term is smaller than $\frac{\log n}{n^2}$.
But
\[R_n^{(j)}=(\log n)^{C\log\log n}n^{1/3}\min(j,n+1-j)^{-1/3} \leqslant \eta^{-1/3}(\log
n)^{C\log\log n}.\]
Then $(R_n^{(j)})^2n^{-c_0-2}=o_{\eta}\big(\frac{\log n}{n^2}\big)$. As a consequence, \[
\esp\big[(\lambda_j-\gamma_j^{m,n})^2\big]=\esp\big[(\lambda_j'-\gamma_j^{m,n})^2\big]+o_{\eta}\Big(\frac{\log
n}{n^2}\Big)
\]
and we get the desired result
\[
\esp\big[(\lambda_j-\gamma_j^{m,n})^2\big]\leqslant C\frac{\log n}{n^2},
\]
$C$ depending only on $\eta$, $A_1$ and $A_2$.

\subsubsection{Between the bulk and the edge of the spectrum}
Let $0< \eta \leqslant \frac{1}{2}$, $K > \kappa$ and $(1-\eta)n \leqslant j \leqslant n-K\log n$. From
Theorem
\ref{thm_LUE_intermediate},
$\esp\big[(\lambda_j'-\gamma_j^{m,n})^2\big] \leqslant C\frac{\log (n-j)}{n^{4/3}(n-j)^{2/3}}$. Thus, from
\eqref{eq_difference_second_moment_covariance}, it remains to show that the error term is smaller than $\frac{\log
(n-j)}{n^{4/3}(n-j)^{2/3}}$.
But
\[R_n^{(j)}=(\log n)^{C\log\log n}n^{1/3}(n+1-j)^{-1/3}.\]
Then $(R_n^{(j)})^2n^{-c_0-2}=o\big(\frac{\log (n-j)}{n^{4/3}(n-j)^{2/3}}\big)$. As a
consequence, \[
\esp\big[(\lambda_j-\gamma_j^{m,n})^2\big]=\esp\big[(\lambda_j'-\gamma_j^{m,n})^2\big]+o\Big(\frac{\log
(n-j)}{n^{4/3}(n-j)^{2/3}}\Big)
\]
and we get the desired result
\[
\esp\big[(\lambda_j-\gamma_j^{m,n})^2\big]\leqslant C\frac{\log
(n-j)}{n^{4/3}(n-j)^{2/3}},\]
$C$ depending only on $\eta$, $A_1$, $A_2$ and $K$. A similar result probably holds for the left-side of the spectrum, when $\rho>1$.

\subsubsection{At the edge of the spectrum}\label{bounds_Wishart_edge}
From Corollary \ref{thm_LUE_edge}, $\esp\big[(\lambda_n'-\gamma_n^{m,n})^2\big]=
\esp\big[(\lambda_n'-b_{m,n})^2\big]\leqslant Cn^{-4/3}$. By means
of \eqref{eq_difference_second_moment_covariance}, it remains to prove that the error term is smaller than $n^{-4/3}$.
We have \[R_n^{(n)}= (\log n)^{C\log\log n}n^{1/3}.\]
Consequently
$(R_n^{(n)})^2n^{-c_0-2}=o\big(n^{-4/3}\big)$. Then \[
\esp\big[(\lambda_n-b_{m,n})^2\big]=\esp\big[(\lambda_n'-2)^2\big]+o\big(n^{-4/3}\big)
\]
and
\[
\esp\big[(\lambda_n-2)^2\big]\leqslant Cn^{-4/3}.
\]
If this bound holds for the smallest eigenvalue $\lambda_1$ of LUE matrices, the same result is available for non Gaussian covariance matrices.

\section{Real Wishart matrices}\label{real_covariance}
The aim of this section is to prove Theorems \ref{thm_bulk_covariance}, \ref{thm_intermediate_covariance} and \ref{thm_edge_covariance} for real covariance matrices. The Four Moment Theorem (Theorem \ref{thm_4_moments_cov}) by Tao, Vu and Wang as well as Pillai and Yin's
Localization Theorem (Theorem \ref{thm_localization_covariance}) still hold for real covariance matrices. Section
\ref{section_covariance} is therefore valid for real matrices. The point is then to establish the results in the LOE case.

As announced in Section \ref{section_LUE_edge}, the variance of eigenvalues at the right edge of the spectrum is known to be bounded by $n^{-4/3}$ for LOE matrices (see \cite{LeRi_2010_deviations}). The conclusion for the largest eigenvalue is then established for large families of real covariance matrices.
\[\Var(\lambda_n) \leqslant \frac{\tilde{C}}{n^{4/3}}.\]

For eigenvalues in the bulk of the spectrum, following O'Rourke's approach (see \cite{Or_2010_fluctuations}), a Central Limit Theorem similar to the one established by Su in \cite{Su_2006_fluctuations} may be proved. In
particular, the normalization is still of the order of $(\frac{\log n}{n^2})^{1/2}$ and differs from
the complex case only by a constant. It is therefore natural to expect the same bound on the variance for LOE matrices.
The situation is completely similar for intermediate eigenvalues.
But LOE matrices do not have the same determinantal properties as LUE matrices, and it is therefore not clear
that a deviation inequality (similar to \eqref{eq_inequality_counting_function_lue}) holds for the eigenvalue
counting function. However, LOE and LUE matrices are
linked by interlacing formulas established by Forrester and Rains (see \cite{FoRa_2001_relationships}). These
formulas lead to the following relation between the eigenvalue counting functions in the complex and real cases: for all $t \in
\mathbb{R}$,

\begin{equation}\label{entrelacement_covariance}
\mathcal{N}_{t}(S_{m,n}^{\mathbb{C}})\overset{(d)}{=}\frac{1}{2}\big(\mathcal{N}_{t}(S_{m,n}^{\mathbb{R}})+\mathcal{N}_{t}(\tilde{S}
_{m,n}^{\mathbb{R}}
)\big)+\zeta_N(t) ,
\end{equation}
where
$S_{m,n}^{\mathbb{C}}$ is from the LUE,
$S_{m,n}^{\mathbb{R}}$, $\tilde{S}_{m,n}^{\mathbb{R}}$ are independent matrices from the LOE and $\zeta_N(t)$ takes values in
$\left\{-\frac{3}{2},-1,-\frac{1}{2},0,\frac{1}{2},1,\frac{3}{2}\right\}$.
See \cite{Or_2010_fluctuations} for more details.

The aim is now to establish a deviation inequality for the eigenvalue counting function similar to
\eqref{eq_inequality_counting_function_lue}. From \eqref{eq_inequality_counting_function_lue}, we know that for all $u
\geqslant 0$, 
\begin{equation*}
 \P\big(|\mathcal{N}_t(S_{m,n}^{\mathbb{C}})-n\mu_t|\geqslant u+C_1\big)\leqslant
2\exp\Big(-\frac{u^2}{2\sigma_t^2+u}\Big).
\end{equation*}
Set $C_1'=C_1+\frac{3}{2}$ and let $u \geqslant 0$. We can then write
\begin{align*}
\P\big(\mathcal{N}_t(S_{m,n}^{\mathbb{R}})- n & \mu_t \geqslant u+C_1'\big)^2 \\
& =  \P\Big(
\mathcal{N}_t(S_{m,n}^{\mathbb{R}})-n\mu_t \geqslant u+C_1', \ \mathcal{N}_t(\tilde{S}_{m,n}^{\mathbb{R}})-n\mu_t
\geqslant u+C_1'\Big)\\
 & \leqslant 
\P\Big(\tfrac{1}{2}\big(\mathcal{N}_t(S_{m,n}^{\mathbb{R}})+\mathcal{N}_t(\tilde{S}_{m,n}^{\mathbb{R}})\big)-n\mu_t
\geqslant u+C_1'\Big)\\
 & \leqslant  \P\big(\mathcal{N}_t(S_{m,n}^{\mathbb{C}})-n\mu_t \geqslant u+C_1'-\frac{3}{2}\big)\\
 & \leqslant  2\exp\Big(-\frac{u^2}{2\sigma_t^2+u}\Big).
\end{align*}
Repeating the computations for $\P\big(\mathcal{N}_t(S_{m,n}^{\mathbb{R}})-n\mu_t \leqslant -u-C_1'\big)$ and
combining with the preceding yield
\begin{equation}\label{real_inequality_counting_function_covariance}
 \P\big(|\mathcal{N}_t(S_{m,n}^{\mathbb{R}})-n\mu_t| \geqslant u+C_1'\big) \leqslant
2\sqrt{2}\exp\Big(-\frac{u^2}{4\sigma_t^2+2u}\Big).
\end{equation}
Note that $\sigma_t^2$ is still the variance of $\mathcal{N}_t(S_{m,n}^{\mathbb{C}})$ in the preceding formula.

What remains then to be proved is very similar to the complex case. From
\eqref{real_inequality_counting_function_covariance} and Su's bounds on the variance $\sigma_t^2$ (see
\eqref{sup_variance_lue} for the bulk case and \eqref{sup_variance_lue_interm} for the intermediate
case), deviation inequalities for individual eigenvalues can be deduced, as was done to prove
Propositions~\ref{prop_deviation_lue_bulk} and \ref{prop_deviation_lue_interm}.
It is
then straightforward to derive the announced bounds on the variances for LOE matrices.
The argument developed in Section \ref{section_covariance} in order to extend the LUE results to large families of
covariance matrices can be reproduced to reach the desired bounds on the variances of eigenvalues in
the bulk and between the bulk and the edge of the spectrum for families of real covariance matrices.

\section[Rate of convergence]{Rate of convergence towards the Marchenko-Pastur distribution}\label{section_wasserstein_covariance}
In this whole section, we suppose that Theorem \ref{thm_intermediate_covariance} holds for left-side intermediate eigenvalues.
The bounds on $\esp[(\lambda_j-\gamma_j^{m,n})^2]$ developed in the preceding sections lead to a bound on the rate of
convergence of the empirical spectral measure $L_{m,n}$ towards the Marchenko-Pastur distribution in terms of
$2$-Wasserstein distance. Recall that $W_2(L_{m,n},\mu_{m,n})$ is a random variable defined by
\[W_2(L_{m,n},\mu_{m,n})
    =\inf \bigg (\int_{\mathbb{R}^2}|x-y|^2\,d\pi(x,y)\bigg)^{1/2},\]
where the infimum is taken over all probability measures $\pi$ on $\mathbb{R}^2$ with respective
marginals $L_{m,n}$ and $\mu_{m,n}$. To achieve the expected bound, we rely
on another expression of $W_2$ in terms of distribution functions, namely
\begin{equation}\label{w2_fct_repartition_covariance}
 W_2^2(L_{m,n},\mu_{m,n}) = \int_0^1\big(F_{m,n}^{-1}(x)-G_{m,n}^{-1}(x)\big)^2\,dx,
\end{equation}
where $F_{m,n}^{-1}$ (respectively $G_{m,n}^{-1}$) is the generalized inverse of the distribution function $F_{m,n}$
(respectively $G_{m,n}$) of $L_{m,n}$ (respectively $\mu_{m,n}$) (see for example \cite{Vi_2003_book}). 
On the basis of this representation, the following statement may be derived.

\begin{prop}\label{prop_w2_covariance}
There exists a constant $C>0$ depending only on $A_2$ such that for all $1 \leqslant \frac{m}{n} \leqslant A_2$,
\begin{equation}\label{borne_ps_w2_covariance}
 W_2^2(L_{m,n},\mu_{m,n}) \leqslant \frac{2}{n}\sum_{j=1}^n
(\lambda_j-\gamma_j^{m,n})^2+\frac{C}{n^2}.
\end{equation}
\end{prop}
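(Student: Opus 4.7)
The plan is to exploit the distribution-function representation \eqref{w2_fct_repartition_covariance} of $W_2^2$ and cut it into a random part (which matches the right-hand side) and a purely deterministic part (which will give the $C/n^2$ error).

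The generalized inverse $F_{m,n}^{-1}$ is piecewise constant with value $\lambda_j$ on the interval $\bigl(\tfrac{j-1}{n},\tfrac{j}{n}\bigr]$, so
\[
W_2^2(L_{m,n},\mu_{m,n})=\sum_{j=1}^n\int_{(j-1)/n}^{j/n}\bigl(\lambda_j-G_{m,n}^{-1}(x)\bigr)^2\,dx.
\]
By definition of $\gamma_j^{m,n}$ we have $G_{m,n}(\gamma_j^{m,n})=j/n$, i.e.\ $G_{m,n}^{-1}(j/n)=\gamma_j^{m,n}$. Apply the elementary bound $(a-b)^2\le 2(a-c)^2+2(c-b)^2$ with $c=\gamma_j^{m,n}$ inside each integral to get
\[
W_2^2(L_{m,n},\mu_{m,n})\le \frac{2}{n}\sum_{j=1}^n(\lambda_j-\gamma_j^{m,n})^2+2\sum_{j=1}^n\int_{(j-1)/n}^{j/n}\bigl(\gamma_j^{m,n}-G_{m,n}^{-1}(x)\bigr)^2\,dx.
\]
The first term is the random piece appearing in \eqref{borne_ps_w2_covariance}; it remains to estimate the deterministic piece by $C/n^2$.

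For $x\in\bigl(\tfrac{j-1}{n},\tfrac{j}{n}\bigr]$, monotonicity of $G_{m,n}^{-1}$ gives $G_{m,n}^{-1}(x)\in[\gamma_{j-1}^{m,n},\gamma_j^{m,n}]$ (with the convention $\gamma_0^{m,n}=a_{m,n}$), hence $(\gamma_j^{m,n}-G_{m,n}^{-1}(x))^2\le (\gamma_j^{m,n}-\gamma_{j-1}^{m,n})^2$. The deterministic part is therefore bounded by $\frac{2}{n}\sum_{j=1}^n(\gamma_j^{m,n}-\gamma_{j-1}^{m,n})^2$, and the task reduces to showing
\[
\sum_{j=1}^n\bigl(\gamma_j^{m,n}-\gamma_{j-1}^{m,n}\bigr)^2\le \frac{C}{n},
\]
for some constant $C$ depending only on $A_2$.

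This is where the main technical work lies, and it must be handled region by region using the explicit Marchenko--Pastur density. In the bulk, where $\mu_{m,n}$ is bounded below by a positive constant $c(A_2)$, the relation $\tfrac{1}{n}=\int_{\gamma_{j-1}^{m,n}}^{\gamma_j^{m,n}}\mu_{m,n}\le \|\mu_{m,n}\|_\infty (\gamma_j^{m,n}-\gamma_{j-1}^{m,n})$ forces $\gamma_j^{m,n}-\gamma_{j-1}^{m,n}\le C/n$, so the contribution is at most $n\cdot C^2/n^2=O(1/n)$. Near the right (soft) edge, $\mu_{m,n}(x)\asymp \sqrt{b_{m,n}-x}$ and a computation analogous to \eqref{eq_distance_gamma_bord_droit_dependant_j} yields $\gamma_j^{m,n}-\gamma_{j-1}^{m,n}\le C n^{-2/3}(n-j+1)^{-1/3}$, so the squared gaps sum to at most $Cn^{-4/3}\sum_{k=1}^n k^{-2/3}=O(1/n)$. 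The left edge is treated by the same argument when $\rho>1$ (soft edge, density $\asymp\sqrt{x-a_{m,n}}$), while in the case $m=n$ (hard edge, density $\asymp x^{-1/2}$) the relation $\int_0^{\gamma_j^{m,n}}\mu_{m,n}=j/n$ gives $\gamma_j^{m,n}\asymp (j/n)^2$ and thus $\gamma_j^{m,n}-\gamma_{j-1}^{m,n}\asymp j/n^2$, whose squares sum to $O(n^{-1})$ as well. Summing the three regimes and plugging back into the decomposition yields \eqref{borne_ps_w2_covariance}.

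The main obstacle is uniformity of the constant $C$ across the three regimes (bulk, soft edge, hard edge). The bulk bound is easy; the delicate part is the edge, where gaps blow up but the $2/3$--power cushion in the density profile compensates after squaring and summation. All constants can be controlled by $A_2$ through the uniform bounds $a_{m,n},b_{m,n}\le(1+\sqrt{A_2})^2$ already used in Section~\ref{LUE}.
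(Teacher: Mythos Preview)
Your approach is essentially the same as the paper's: use the quantile representation \eqref{w2_fct_repartition_covariance}, split each summand via $(a-b)^2\le 2(a-c)^2+2(c-b)^2$ with $c=\gamma_j^{m,n}$, bound the deterministic piece by the consecutive gaps $\gamma_j^{m,n}-\gamma_{j-1}^{m,n}$, and control these gaps from the explicit Marchenko--Pastur density to get $\sum_j(\gamma_j^{m,n}-\gamma_{j-1}^{m,n})^2\le C/n$. The paper carries out the gap estimate only for $j-1\ge n/2$ (obtaining $\gamma_j^{m,n}-\gamma_{j-1}^{m,n}\le c\,n^{-2/3}\min(j,n+1-j)^{-1/3}$ via \eqref{eq_distance_gamma_bord_droit_dependant_j}) and asserts the symmetric case is similar; you instead separate bulk, soft edge, and hard edge, and in particular treat the $m=n$ hard edge explicitly with the different scaling $\gamma_j^{m,n}-\gamma_{j-1}^{m,n}\asymp j/n^2$. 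That extra care is not strictly needed (the paper's uniform bound $c\,n^{-2/3}j^{-1/3}$ already dominates $j/n^2$ for $j\le n$), but it is correct and slightly more informative.
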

\begin{proof} From \eqref{w2_fct_repartition_covariance},
\[W_2^2(L_{m,n},\mu_{m,n}) = \int_0^1\big(F_{m,n}^{-1}(x)-G_{m,n}^{-1}(x)\big)^2\,dx.\] Then,
\begin{align*}
 W_2^2(L_{m,n},\mu_{m,n}) & = \sum_{j=1}^n\int_{\frac{j-1}{n}}^{\frac{j}{n}}\big(\lambda_j-G_{m,n}^{-1}(x)\big)^2\,dx\\
 & \leqslant \frac{2}{n}\sum_{j=1}^n(\lambda_j-\gamma_j^{m,n})^2 +
2\sum_{j=1}^n\int_{\frac{j-1}{n}}^{\frac{j}{n}}\big(\gamma_j^{m,n}-G_{m,n}^{-1}(x)\big)^2\,dx.
\end{align*}
But $\gamma_j^{m,n}=G_{m,n}^{-1}\big(\frac{j}{n}\big)$ and $G_{m,n}^{-1}$ is non-decreasing. Therefore,
$\big|\gamma_j^{m,n}-G_{m,n}^{-1}(x)\big| \leqslant
\gamma_{j}^{m,n}-\gamma_{j-1}^{m,n}$ for all $x \in \big[\frac{j-1}{n},\frac{j}{n}\big]$. Consequently,
\begin{equation}\label{inegalite_w2_intermediaire_covariance}
W_2^2(L_{m,n},\mu_{m,n}) \leqslant \frac{2}{n}\sum_{j=1}^n(\lambda_j-\gamma_j^{m,n})^2 +
\frac{2}{n}\sum_{j=1}^n(\gamma_j^{m,n}-\gamma_{j-1}^{m,n})^2.
\end{equation}
But if $j-1 \geqslant \frac{n}{2}$,
\begin{align*}
\frac{1}{n} & = \int_{\gamma_{j-1}^{m,n}}^{\gamma_j^{m,n}}\frac{1}{2\pi x}\sqrt{(b_{m,n}-x)(x-a_{m,n})}\,dx\\
  & \geqslant \frac{\sqrt{\gamma_{j-1}^{m,n}-a_{m,n}}}{2\pi b_{m,n}}\int_{\gamma_{j-1}^{m,n}}^{\gamma_j^{m,n}}\sqrt{b_{m,n}-x}\,dx\\
 & \geqslant
\frac{\sqrt{\gamma_{j-1}^{m,n}-a_{m,n}}}{3\pi b_{m,n}}(b_{m,n}-\gamma_{j-1}^{m,n})^{3/2}\bigg(1-\Big(1-\frac{\gamma_j^{m,n}-\gamma_{j-1}^{m,n}}{b_{m,n}-\gamma_{j-1}^{m,n}}\Big)^{
3/2 }\bigg)\\
  & \geqslant
 \frac{1}{3b_{m,n}\sqrt{b_{m,n}-a_{m,n}}}(b_{m,n}-\gamma_{j-1}^{m,n})^{1/2}(\gamma_j^{m,n}-\gamma_{j-1}^{m,n})\\
 & \geqslant
C(A_2)\Big(\frac{n-j+1}{n}\Big)^{1/3}(\gamma_j^{m,n}-\gamma_{j-1}^{m,n}),
\end{align*}
from \eqref{eq_distance_gamma_bord_droit_dependant_j}. Then
\[\gamma_j^{m,n}-\gamma_{j-1}^{m,n} \leqslant \frac{C(A_2)}{n^{2/3}(n-j+1)^{2/3}}.\]
It may be shown that a similar bound holds if $j-1 \leqslant \frac{n}{2}$. As a summary, there exists a
constant $c>0$ depending only on $A_2$ such that, for all $j \geqslant
2$,
\begin{equation}
\gamma_j^{m,n}-\gamma_{j-1}^{m,n} \leqslant \frac{c}{n^{2/3}\min(j,n+1-j)^{1/3}}.
\end{equation}
This yields
\begin{equation*}
 \sum_{j=1}^n(\gamma_j^{m,n}-\gamma_{j-1}^{m,n})^2  \leqslant 
\frac{c^2}{n^{4/3}}\sum_{j=1}^n\frac{1}{\min(j,n+1-j)^{2/3}}\leqslant \frac{C}{n}, 
\end{equation*}
where $C>0$ depends only on $A_2$.
Then \eqref{inegalite_w2_intermediaire_covariance} becomes
\[W_2^2(L_{m,n},\mu_{m,n}) \leqslant \frac{2}{n}\sum_{j=1}^n
(\lambda_j-\gamma_j^{m,n})^2+\frac{C}{n^2},\]
where $C>0$ depends only on $A_2$, which is the claim.
\end{proof}

\begin{proof}[Proof of Corollary \ref{cor_wasserstein_2_covariance}]
Let $n \geqslant 2$. Due to Proposition \ref{prop_w2_covariance},
\[\esp\big[W_2^2(L_{m,n},\mu_{m,n})\big] \leqslant \frac{2}{n}\sum_{j=1}^n
\esp\big[(\lambda_j-\gamma_j^{m,n})^2\big]+\frac{C}{n^2}.\]
We then make use of the bounds on $\esp\big[(\lambda_j-\gamma_j^{m,n})^2\big]$
produced in the previous sections.
Set $0< \eta \leqslant \frac{1}{2}$ and $K>\kappa$ so that $K\log n \leqslant \eta n$.
We first decompose
\begin{align*}
\sum_{j=1}^n \esp\big[(\lambda_j-\gamma_j^{m,n})^2\big] & = \sum_{j=1}^{K\log
n}\esp\big[(\lambda_j-\gamma_j^{m,n})^2\big] + \sum_{j=K\log n+1}^{\eta
n}\esp\big[(\lambda_j-\gamma_j^{m,n})^2\big]\\
 & + \sum_{j=\eta n+1}^{(1-\eta)n-1}\esp\big[(\lambda_j-\gamma_j^{m,n})^2\big] + \sum_{j=(1-\eta)n}^{n-K\log
n-1}\esp\big[(\lambda_j-\gamma_j^{m,n})^2\big]\\
 & +\sum_{j=n-K\log n}^{n}\esp\big[(\lambda_j-\gamma_j^{m,n})^2\big]\\
 & = \Sigma_1 + \Sigma_2 + \Sigma_3 + \Sigma_4 + \Sigma_5.
\end{align*}
The sum $\Sigma_3$ will be bounded using the bulk case (Theorem \ref{thm_bulk_covariance}), while Theorem \ref{thm_intermediate_covariance}
will be
used to handle $\Sigma_2$ and $\Sigma_4$. 
A crude version of Theorem \ref{thm_localization_covariance} will be enough to
bound $\Sigma_1$ and $\Sigma_5$. To start with thus, from Theorem \ref{thm_bulk_covariance},
\[\Sigma_3 \leqslant \sum_{j=\eta n+1}^{(1-\eta)n-1}C\frac{\log n}{n^2} \leqslant C\frac{\log
n}{n}.\]
Secondly, from Theorem \ref{thm_intermediate_covariance},
\begin{equation*}
\Sigma_2+\Sigma_4  \ \leqslant \ \frac{C}{n^{4/3}}\sum_{j=K\log n+1}^{\eta
n}\frac{\log j}{j^{2/3}} \ \leqslant \ C\frac{\log n}{n}.
\end{equation*}
Next $\Sigma_1$ and $\Sigma_5$ have only $K\log n$ terms. If each term is bounded by $\frac{C}{n}$ where
$C$ is a positive universal constant, we get that $\Sigma_1+\Sigma_5 \leqslant \frac{2KC\log n}{n}$,
which is enough to prove the desired result on $\sum_{j=1}^n\esp\big[(\lambda_j-\gamma_j^{m,n})^2\big]$. For $n$
large
enough depending only on constant $C$ in Theorem \ref{thm_localization_covariance}, $\frac{1}{\sqrt{n}} \geqslant
\frac{(\log n)^{C\log\log n}}{n^{2/3}\min(j,n+1-j)^{1/3}}$ and Theorem \ref{thm_localization_covariance} yields
\[\P\Big(|\lambda_j-\gamma_j^{m,n}| \geqslant \frac{1}{\sqrt{n}}\Big) \leqslant
Ce^{-(\log n)^{c\log \log n}}.\] Then, by the Cauchy-Schwarz inequality,
\begin{align*}
 \esp\big[(\lambda_j-\gamma_j^{m,n})^2\big] & \leqslant
\esp\Big[(\lambda_j-\gamma_j^{m,n})^2\mathbbm{1}_{|\lambda_j-\gamma_j^{m,n}|\leqslant \frac{1}{\sqrt{n}}}\Big]+
\esp\Big[(\lambda_j-\gamma_j^{m,n})^2\mathbbm{1}_{|\lambda_j-\gamma_j^{m,n}|> \frac{1}{\sqrt{n}}}\Big]\\
 & \leqslant
\frac{1}{n}+\sqrt{\esp\big[|\lambda_j-\gamma_j^{m,n}|^4\big]}\sqrt{\P\Big(|\lambda_j-\gamma_j^{m,n}| >
\frac{1}{\sqrt{n}}\Big)}\\
& \leqslant
\frac{1}{n}+\sqrt{3}C n^{2}e^{-(\log n)^{c\log \log n}}.
\end{align*}
As $\sqrt{3}C n^{2}e^{-(\log n)^{c\log \log n}}=o(\frac{1}{n})$, there exists a constant $C>0$ such that
$\esp\big[(\lambda_j-\gamma_j^{m,n})^2\big] \leqslant \frac{C}{n}$.
Then
\[\Sigma_1+\Sigma_5 \leqslant 2KC\frac{\log n}{n}.\]
As a consequence,
\[\sum_{j=1}^n \esp\big[(\lambda_j-\gamma_j^{m,n})^2\big] \leqslant C\frac{\log n}{n}.\]
Therefore
\[\esp\big[W_2(L_{m,n},\mu_{m,n})^2\big] \leqslant C\frac{\log n}{n^2},\]
where $C>0$ depends only on $A_1$ and $A_2$, which is the claim. The corollary is thus established.
\end{proof}

\subsection*{Acknowledgements}
I would like to thank my advisor, Michel Ledoux, for bringing up this problem to me, for the several discussions we had about this work, and Emmanuel Boissard for very useful conversations about Wasserstein distances.

\bibliographystyle{plain}


\end{document}